\numberwithin{equation}{section}
\newcommand{\dbar}{\ensuremath{\overline\partial}}
\newcommand{\B}{\ensuremath{\mathcal{B}}}
\newcommand{\X}{\ensuremath{\mathcal{X}}}
\def\k{\kappa}
\def\p{\partial}
\def\o{\overline}
\def\b{\bar}
\def\mb{\mathbb}
\def\mc{\mathcal}
\def\n{\nabla}
\newtheorem{thm}{Theorem}[section]
\newtheorem{lemma}[thm]{Lemma}
\newtheorem{prop}[thm]{Proposition}
\newtheorem{cor}[thm]{Corollary}
\theoremstyle{definition}
\newtheorem{rem}[thm]{Remark}
\theoremstyle{definition}
\newtheorem{defn}[thm]{Definition}
\theoremstyle{plain}
\newcommand{\comment}[1]{}
\newenvironment{aligns}{\equation\aligned}{\endaligned\endequation}
\begin{document}

\title{Curvature of the base manifold of a Monge-Amp\`ere fibration and its existence}

\author{Xueyuan Wan}
\author{Xu Wang}

\address{Xueyuan Wan: Mathematical Science Research Center, Chongqing University of Technology, Chongqing 400054, China}
\email{xwan@cqut.edu.cn}

\address{Xu Wang: Department of Mathematical Sciences, Norwegian University of Science and Technology, No-7491 Trondheim, Norway.}
\email{xu.wang@ntnu.no}

\begin{abstract}
In this paper, we consider a special relative K\"ahler fibration that satisfies a homogenous Monge-Amp\`ere equation, which is called a Monge-Amp\`ere fibration. There exist two canonical types of generalized Weil-Petersson metrics on the base complex manifold of the fibration. For the second generalized Weil-Petersson metric, we obtain an explicit curvature formula and prove that the holomorphic bisectional curvature is non-positive, the holomorphic sectional curvature, the Ricci curvature, and the scalar curvature are all bounded from above by a negative constant. For a holomorphic vector bundle over a compact K\"ahler manifold,  we prove that it admits a projectively flat Hermitian structure if and only if the associated projective bundle fibration is a Monge-Amp\`ere fibration. 
	In general, we can prove that a relative K\"ahler fibration is Monge-Amp\`ere  if and only if an associated infinite rank Higgs bundle is Higgs-flat. We also discuss some typical examples of Monge-Amp\`ere fibrations.  	
\end{abstract}

  \subjclass[2020]{32Q05, 32G05, 53C55}  
  \keywords{Monge-Amp\`ere fibrations,  Generalized Weil-Petersson metrics, Negative curvature, Projectively flat, Higgs-flat}
  \thanks{Xueyuan Wan is partially supported by the National Natural Science Foundation of China (grant no. 12101093) and Scientific Research Foundation of the Chongqing University of Technology. }

\maketitle
\tableofcontents

\section*{Introduction}

The curvature property of the moduli space of a holomorphic family of compact complex manifolds is an important research topic in complex geometry. 
For the moduli space of curves, there exists a classical Weil-Petersson metric, which is K\"ahler  \cite[Theorem 4]{A61}, and the  Ricci curvature, the
holomorphic sectional curvature and the scalar curvature are negative \cite[\S  10, Theorem]{A61b}, the holomorphic bisectional curvature is also negative \cite[Theorem 1.3]{LSYY}. There are also other curvature properties for the Weil-Petersson metric, such as negative sectional curvature \cite[Theorem 5]{Tromba}  \cite[Theorem 4.5]{ Wolpert}, strongly-negative curvature in the sense of Siu \cite[Theorem 1]{Sch0}, dual Nakano negative \cite[Theorem 4.1]{LSY}, non-positive Riemannian sectional curvature operator \cite[Theorem 1.1]{Wu},  etc. One can refer to \cite{LSYY} for the relations among these curvature properties of the Weil-Petersson metric. Moreover, by deriving an explicit formula for the curvature of the Weil-Petersson metric, S. Wolpert proved that the holomorphic sectional curvature,  the Ricci curvature, and the scalar curvature are all bounded above by a negative constant \cite[Lemma 4.6]{ Wolpert}. 

For the moduli space of compact K\"ahler-Einstein manifolds, there is a canonical metric, i.e.,  the generalized Weil-Petersson metric, which can be proved to be K\"ahler \cite[Theorem 12.3]{Koiso}. For the case of negative first Chern class, Y.-T. Siu \cite{Siu} computed the curvature of the generalized Weil-Petersson metric and obtained a  criterion on the negativity of the holomorphic bisectional curvature of the metric  \cite[Theorem 5.5]{Siu}. In \cite{Sch}, G. Schumacher considered the case of K\"ahler-Einstein manifolds with nonzero Ricci curvature $k$ and also gave an explicit formula \cite[Theorem 1]{Sch}. As an application,  for $k>0$,  he proved that the holomorphic sectional curvature and Ricci curvature of the generalized Weil-Petersson metric are bounded from below by a negative constant \cite[Corollary 1]{Sch}. For the moduli space of Calabi-Yau manifolds, G. Schumacher \cite{Sch-1} and G. Tian \cite{Tian}  showed that the generalized Weil-Petersson metric is K\"ahler.
 A. Nannicini \cite[proof of Theorem 1]{Nan} and A. N. Todorov \cite{Todorov} computed the curvature tensor of the generalized Weil-Petersson metric (two simple proofs of the curvature formula were given by C.-L. Wang \cite[Theorem 2.1]{WangCL} who also showed that both the holomorphic bisectional curvature and the Ricci curvature are bounded from below by a negative constant). In \cite{LS}, Z. Lu and X. Sun obtained an explicit formula for the curvature of partial Hodge metric  \cite[Theorem 1.1]{LS}. In the case of the moduli space of Calabi-Yau fourfolds, they proved that
the holomorphic bisectional curvature of the partial metric with a special factor (which is precisely the Hodge metric (up to a constant)) is non-positive, the Ricci curvature and the holomorphic sectional curvature are all bounded above by a negative constant \cite[Theorem 1.2]{LS}. For the general case, Z. Lu constructed a Hodge metric and proved that its holomorphic bisectional curvature is non-positive, the Ricci curvature and holomorphic sectional curvature are negative away from zero by a constant number \cite[Theorem 5.1]{Lu}. 
 For other related results, one can refer to \cite{BPW, Nau, Sch12}, etc. 

In this paper, we will study the curvature properties of the base complex manifold of a Monge-Amp\`ere fibration\footnote{The name of the Monge-Amp\`ere fibration was firstly given by Professor Bo Berndtsson.}, see Definition \ref{Monge-Ampere}. In \cite{Burns82}, D. Burns considered the curvature of a Monge-Amp\`ere foliation with only one-dimensional leaves (a local version of a Monge-Amp\`ere fibration) and obtained that the curvature is bounded from above by a negative constant \cite[Theorem 3.1]{Burns82}. A related negative curvature property for the space of all compatible almost complex structures was proven by Smolentsev in  \cite{Smo}. Let $p: (\mathcal X, \omega)\to\mathcal B$ be a relative K\"ahler fibration. It is called a Monge-Amp\`ere fibration if $\omega^{n+1}=0$, where $n$ denotes the dimension of each fiber. If the Kodaira-Spencer map is injective, then one can define two kinds of generalized Weil-Petersson metrics on the base complex manifold $\B$, i.e., $\omega_{\rm{WP}}$ and $\omega_{\mc{WP}}$, see Section \ref{WP metrics} for their definitions. The generalized Weil-Petersson metric $\omega_{\mc{WP}}$ is defined by the $\omega$-Kodaira-Spencer tensor $\kappa_j$ without taking harmonic projection,  so we always have $\omega_{\mc{WP}} \geq \omega_{\rm{WP}}$.   Our main result is the following curvature formula for the generalized Weil-Petersson metric $\omega_{\mc{WP}}$.

\begin{thm}\label{main theorem}
Let  $p:(\mc{X},\omega)\to \B$ be a Monge-Amp\`ere fibration with injective Kodaira-Spencer map. 
Then the metric $\omega_{\mc{WP}}$ is K\"ahler and its curvature is given by
	\begin{align*}
R_{j\b{k}l\b{m}}
=-\langle\o{\k_{m}}\k_j, \o{\k_{l}}\k_k\rangle-\langle\k_j\o{\k_{m}}, \k_k\o{\k_l}\rangle-\langle\operatorname{H}^{\perp}(L_{V_l}\k_j),  \operatorname{H}^{\perp}(L_{V_m}\k_{k})\rangle,
\end{align*}
where $\kappa_j$ is the $\omega$-Kodaira-Spencer tensor, see Definition \ref{de:KST}; $V_l$ denotes the horizontal lift of $\frac{\p}{\p t^l}$, see \eqref{eq:Vj}; the operator $L$ denotes the Lie derivative, $\operatorname{H}^{\perp}$ denotes the orthogonal projection from $A^{0,1}(X_t, T_{X_t})$ to  $\operatorname{Span}\{\k_{i}\}^{\perp}$.
\end{thm}
By using the above curvature formula, one can obtain some immediate consequences on various negativity results of different types of curvature.
\begin{cor}\label{main corollary}
	Let  $p:(\mc{X},\omega)\to \B$ be a Monge-Amp\`ere fibration with injective Kodaira-Spencer map. The holomorphic bisectional curvature of the generalized Weil-Petersson metric $\omega_{\mc{WP}}$ satisfies 
	\begin{align*}
	R(\xi,\o{\xi},\eta,\o{\eta})\leq -\frac{2}{n}|X_t|^{-1}|\langle\eta,\xi\rangle_{\mc{WP}}|^2
\end{align*}
for any two vectors $\eta,\xi$ in $T_t\mc{B}$, where $|X_t|:=\int_{X_t}\frac{\omega^n_t}{n!}$ denotes the volume of each fiber. In particular, we have the following negativity results of curvature\footnote{The negativity results of curvature are also obtained by Professor Bo Berndtsson independently using a different method based on the holomorphic motion structure of the fibration (see \cite{Bo-new}).}:
\begin{itemize}
\item[(i)] The holomorphic bisectional curvature is non-positive, and is negative if $\langle\eta,\xi\rangle_{\mc{WP}}\neq 0$;
\item[(ii)] The holomorphic sectional curvature and the Ricci curvature are both bounded from above by $-\frac{2}{n}|X_t|^{-1}$, the scalar curvature is bounded from above by $-\frac{2}{n}|X_t|^{-1}\dim\mc{B}$.
\end{itemize}
\end{cor}
Naturally, one may wonder what kind of relative K\"ahler fibration becomes a Monge-Amp\`ere fibration. In particular, for a holomorphic vector bundle over a compact complex manifold $\B$, there is a canonical relative K\"ahler fibration $p:P(E)\to \B$ with each fiber a projective space, where $P(E):=(E-\{0\})/\mb{C}^*$ denotes the projectivization of $E$. A natural question is for which holomorphic vector bundles $E$, the associated projective bundle fibration $p:P(E)\to \B$ is a Monge-Amp\`ere fibration. For this question, we have:

\begin{thm}\label{Theorem 0.3} Let $E$ be a holomorphic vector bundle over a compact K\"ahler manifold $\B$. Then  the following statements are equivalent:
\begin{itemize}
\item[1)]$E$ admits a projectively flat Hermitian structure;
\item[2)] $p:P(E)\to \B$ is a Monge-Amp\`ere fibration.
\end{itemize}
For the case of $\dim \B=1$, both are equivalent to the polystability of $E$.
\end{thm}
In Section \ref{Higgs bundle}, we shall introduce a finite rank Higgs bundle structure associated with a (non-proper) Monge-Amp\`ere fibration over the space $\mc{J}(V,\omega)$ of all $\omega$-compatible complex structures on a symplectic vector space $(V, \omega)$.
 This construction also suggests to introduce a certain infinite rank Higgs bundle for a general relative K\"ahler fibration. Let $\mc{A}:=\{\mc{A}_t\}_{t\in\B}$ be the space of smooth differential forms on $X_t$.  Denote by $\Gamma$ the space of all smooth sections of $\mc{A}$, see \eqref{Gamma}. With respect to the relative K\"ahler form $\omega$, there exists a Lie derivative connection $\n$ on $(\mc{A},\Gamma)$, see \eqref{Lie derivative connection},  which induces a Chern connection $D$ on $(\mc{A},\Gamma)$, see \eqref{Chern connection}, such that $\n-D=\theta+\bar{\theta}$ for a Higgs field $\theta$, where $\theta:= \sum dt^j \otimes \kappa_j$. Denoting by $(\mc{A},\Gamma, D,\theta)$ the associated Higgs bundle, we have:

\begin{thm}\label{Theorem 0.4}
A relative K\"ahler fibration $p: (\X,\omega) \to \B$ is a Monge-Amp\`ere fibration if and only if the following associated infinite rank Higgs bundle 
$$
(\mc{A},\Gamma, D,\theta)
$$
is Higgs-flat (cf. Proposition \ref{pr: flat-higgs}), where each fiber $\mathcal A_t$ denotes the space of smooth differential forms on $X_t$.	
\end{thm}
We also discuss some typical examples of Monge-Amp\`ere fibrations, which are also the motivations for studying such kind of relative K\"ahler fibration. For example, the family of elliptic curves, finite rank Higgs bundle version of a (non-proper) Monge-Amp\`ere fibration, and various kinds of geodesics.

This article is organized as follows. In Section \ref{sec1}, we review some basic definitions and facts on the relative K\"ahler fibrations, Monge-Amp\`ere fibrations, and two types of generalized Weil-Petersson metrics. In Section \ref{sec2}, we will compute the curvature of the generalized Weil-Petersson metric $\omega_{\mc{WP}}$, and we will prove Theorem \ref{main theorem} and Corollary \ref{main corollary}. In Section \ref{Sec3}, we will consider the existence of Monge-Amp\`ere fibrations. In Section \ref{sec3.1}, we will show that a holomorphic vector bundle admits a projectively flat Hermitian structure if and only if the associated projective bundle fibration is a Monge-Amp\`ere fibration, and prove Theorem \ref{Theorem 0.3}. In Section \ref{sec3.2}, we will prove that a relative K\"ahler fibration is Monge-Amp\`ere if and only if an associated infinite rank Higgs bundle is Higgs-flat, and prove Theorem \ref{Theorem 0.4}. The last section will give some typical examples of Monge-Amp\`ere fibrations.

\vspace{5mm}
\noindent{\it Acknowledgements.} We would like to thank Bo Berndtsson and Ya Deng for several useful discussions about the topics of this paper. We also would like to thank the anonymous reviewers for their comments that helped improve the paper.

\section{Preliminaries}\label{sec1}

In this section, we will review some basic definitions and facts on the relative K\"ahler fibrations, Monge-Amp\`ere fibrations, and two types of generalized Weil-Petersson metrics.

\subsection{Relative K\"ahler fibrations} 
Let $\mc{X}$ and $\mc{B}$ be two complex manifolds. 
\begin{defn}\label{de:rkf} We call a proper holomorphic submersion $p: (\mathcal X, \omega)\to\mathcal B$ between two complex manifolds {\it a relative K\"ahler fibration} if $\omega$ is a real, smooth, $d$-closed $(1,1)$-form on $\X$ and $\omega$ is positive on each fiber $X_t:=p^{-1}(t)$ of $p$.
\end{defn}
\begin{defn}\label{de:horizontal-vector-field} Let $p: (\mathcal X, \omega)\to\mathcal B$ be a relative K\"ahler fibration. By vertical vector fields, we mean vector fields on $\mathcal X$ that are tangent to the fibers,  a vector field $V$ on $\mathcal X$ is said to be {\it horizontal} with respect to $\omega$ if 
$$
\omega(V, W)=0
$$
for every vertical $W$.
\end{defn}

The relative K\"ahler form $\omega$ defines a natural inner product (not semi-positive in general) such that
\begin{equation}\label{eq:hermitian}
\langle V, W\rangle_{\omega}=\omega (V, J\overline W),
\end{equation}
where $J$ denotes the complex structure on $\mathcal X$. We say that $V$ is \emph{orthogonal} to $W$ with respect to $\omega$ if $
\langle V, W\rangle_{\omega}=0$.
Thus a vector field is horizontal if and only if it is orthogonal to all vertical vector fields.

\begin{defn}\label{de:horizontal-lift} Let $p: (\mathcal X, \omega)\to\mathcal B$ be a relative K\"ahler fibration, and let $v$ be a vector field on $\mathcal B$. A vector field $V$ on $\mathcal X$ is said to be {\it a horizontal lift} of $v$ with respect to $\omega$ if $V$ is horizontal and $p_*(V)=v$.
\end{defn}

For the horizontal lift of a vector field, we have the following proposition (see e.g. \cite[Section 4.1]{BPW}).
\begin{prop} Every vector field on $\mathcal B$ has a unique horizontal lift. Horizontal lift of a $(1,0)$-vector field (resp. $(0,1)$-vector field) is still a $(1,0)$-vector field (resp. $(0,1)$-vector field).
\end{prop}

 Let $\{t^j\}$ be a holomorphic local coordinate system on $\mathcal B$. Since $p$ is a holomorphic fibration, we can find ${\zeta^\alpha}$ such that $\{t^j, \zeta^\alpha\}$ is a holomorphic local coordinate system on $\mathcal X$. Since $\omega$ is a closed $(1,1)$ form, we write it locally as $\omega=i\partial\dbar \phi$ for some local real function $\phi$.  Then we know that each
\begin{equation}\label{eq:Vj}
V_j:=\frac{\partial}{\partial t^j}-\sum_{\beta=1}^n \phi_{j \bar\beta} \phi^{\bar \beta\alpha } \frac{\partial}{\partial \zeta^\alpha},   \  \ \phi_{j\bar\beta}:=\frac{\partial^2 \phi}{\partial t^j \partial \bar\zeta^\beta},
\end{equation}
is a horizontal lift of $\frac{\partial}{\partial t^j}$, where $(\phi^{\bar \beta\alpha})$ denotes the inverse matrix of $(\phi_{\alpha\bar\beta})$ and $\phi_{\alpha\bar\beta}:=\frac{\partial^2 \phi}{\partial \zeta^\alpha \partial \bar\zeta^\beta}$,  $n$ denotes the complex dimension of each fiber. Denote 
\begin{equation}\label{eq: cjk}
c_{j\bar k}:=\langle V_j, V_k\rangle_{\omega}=\phi_{j\b{k}}-\sum_{\alpha,\beta=1}^n\phi_{j\b{\beta}}\phi^{\b{\beta}\alpha}\phi_{\alpha\b{k}}, \
c(\omega):=i \sum_{j,k=1}^{\dim\mathcal{B}}  c_{j\bar k}\, dt^j\wedge d\bar t^k.
\end{equation}
 We call $c_{j\bar k}$ the {\it geodesic curvatures} and $c(\omega)$ the {\it geodesic curvature form}.
A direct calculation shows that 
\begin{align}\label{decomposition}
\begin{split}
  \omega=i\p\b{\p}\phi=c(\omega)+\omega_{\mc{X}/\B},\quad \omega_{\mc{X}/\B}:=i\sum_{\alpha,\beta=1}^n\phi_{\alpha\b{\beta}}\delta \zeta^\alpha\wedge \delta \b{\zeta}^\beta,
 \end{split}
\end{align}
where $\delta \zeta^\alpha=d\zeta^\alpha+\sum_{\beta,j} \phi_{j\b{\beta}}\phi^{\b{\beta}\alpha} d t^j$.
The following proposition is a generalization of \cite[Lemma 6.1]{Wang15}.
\begin{prop}\label{pr:rkf} Let $\{V_j\}$ be the vector fields defined in \eqref{eq:Vj}, $\dim X_t=n$. Then 
\begin{enumerate}
\item $[V_j, V_k]=0$;
\item $(\omega-c(\omega))^{n+1}=0$;
\item $[V_j, \bar V_k] \,\rfloor\, (\omega|_{X_t})=i (dc_{j\bar k})|_{X_t}$;
\item $[V_j, \bar V_k]\equiv 0$ for all $j,k$ if and only if $d(c(\omega))=0$.
\end{enumerate}
\end{prop}
\begin{proof}
\begin{itemize}
  \item[(1)] By a direct computation, we know that $[V_j,  V_k] $ are vertical. Since $\omega$ is non-degenerate on fibers, it is enough to prove that $[V_j, V_k] \, \rfloor \, \omega=0$ on fibers. Notice that
$$
[V_j, V_k] \, \rfloor \, \omega=(L_{V_j}V_k) \, \rfloor \, \omega=L_{V_j}(V_k\, \rfloor \, \omega)-V_k \, \rfloor \, L_{V_j}\omega,
$$
and by \eqref{eq:Vj} we have
\begin{equation}\label{eq:vj-omega}
V_j\, \rfloor \, \omega=i \sum_{l=1}^{\dim\B} c_{j\bar l} \, d\bar t^l.
\end{equation}
By using the Cartan formula, we get
\begin{equation}\label{eq:wang-1}
[V_j, V_k] \, \rfloor \, \omega=i \sum_{l=1}^{\dim\B} (V_j \, \rfloor\, d c_{k\bar l} )\, d\bar t^l- i \sum_{l=1}^{\dim\B} (V_k \, \rfloor\, d c_{j\bar l} )\, d\bar t^l.
\end{equation}
Thus $[V_j, V_k] \, \rfloor \, \omega=0$ on fibers, and so $[V_j,V_k]=0$.
\item[(2)] From \eqref{decomposition}, one has 
\begin{align*}
\begin{split}
  (\omega-c(\omega))^{n+1}=\omega_{\mc{X}/\B}^{n+1}=\left(i\sum_{\alpha,\beta=1}^n\phi_{\alpha\b{\beta}}\delta \zeta^\alpha\wedge \delta \b{\zeta}^\beta\right)^{n+1}=0.
 \end{split}
\end{align*}

\item[(3)] Notice that 
$$
[V_j, \bar V_k] \, \rfloor \, \omega=(L_{V_j}\bar V_k) \, \rfloor \, \omega=L_{V_j}(\bar V_k\, \rfloor \, \omega)-\bar V_k \, \rfloor \, L_{V_j}\omega
$$
and combining with \eqref{eq:vj-omega} we have
$$
[V_j, \bar V_k] \, \rfloor \, \omega= i \ d c_{j\bar k}-i \sum_{l=1}^{\dim\B} (V_j \, \rfloor\, d c_{l \bar k} )\, d t^l- i \sum_{l=1}^{\dim\B} (\bar V_k \, \rfloor\, d c_{j\bar l} )\, d\bar t^l.
$$
Since $[V_j,  \bar V_k]$ is vertical, so
\begin{align*}
\begin{split}
 [V_j, \bar V_k] \,\rfloor\, (\omega|_{X_t})=\left([V_j, \bar V_k] \, \rfloor \, \omega\right)|_{X_t}=i (dc_{j\bar k})|_{X_t},
 \end{split}
\end{align*}
which proves $(3)$.

\item[(4)] By $(3)$, we know that $dc(\omega)=0$ gives  $[V_j, \bar V_k]\equiv 0$. For the opposite direction, assume that  $[V_j, \bar V_k]\equiv 0$ all for $j,k$, then by $(3)$, we know that $c_{j\bar k}$ depends only on $t\in \B$, thus by $(1)$ and \eqref{eq:wang-1}, we have
$$
0=[V_j, V_k] \, \rfloor \, \omega=i \sum_{l=1}^{\dim\B} \frac{\partial c_{k\bar l}}{\partial t^j} \, d\bar t^l- i \sum_{l=1}^{\dim\B}\frac{\partial c_{j\bar l}}{\partial t^k} \, d\bar t^l,
$$
which implies that $c(\omega)$ is $d$-closed. Thus $dc(\omega)=0$.
\end{itemize}
\end{proof}
\begin{rem}
From (1) and (4) in Proposition \ref{pr:rkf},
the horizontal distribution of a relative K\"ahler fibration is integrable if and only if each geodesic curvature $c_{j\bar k}$ is constant on fibers, which determines a differentiable trivialization of the fibration.
\end{rem}
\begin{rem}
If the geodesic curvature form $c(\omega)$ depends only on the  base $\B$, then 
\begin{align*}
\begin{split}
  \int_{\mc{X}/\B}\frac{\omega^{n+1}}{(n+1)!}=\int_{\mc{X}/\B}c(\omega)\wedge \frac{\omega_{\mc{X}/\B}^n}{n!}=c(\omega)\int_{X_t}\frac{(\omega|_{X_t})^n}{n!}=c(\omega)|X_t|,
 \end{split}
\end{align*}
	where $|X_t|:=\int_{X_t}\frac{(\omega|_{X_t})^n}{n!}$ denotes the volume of each fiber. Hence
	\begin{align*}
\begin{split}
  c(\omega)=\frac{1}{|X_t|} \int_{\mc{X}/\B}\frac{\omega^{n+1}}{(n+1)!},
 \end{split}
\end{align*}
which is $d$-closed. 
\end{rem}

\subsection{Monge-Amp\`ere fibrations}

In this subsection, we will give the definition of a Monge-Amp\`ere fibration.
\begin{defn}\label{Monge-Ampere} A relative K\"ahler fibration $p: (\mathcal X, \omega)\to\mathcal B$ is said to be {\it Monge-Amp\`ere} (we say that $\omega$ is {\it a Monge-Amp\`ere form})  if $\omega$ solves the homogeneous complex Monge--Amp\`{e}re equation, i.e.
$$
\omega^{n+1}\equiv 0,
$$
where $n$ denotes the dimension of the fibers. In general,  a proper holomorphic submersion $p: (\X, \omega_\X) \to (\B, \omega_\B)$ between two K\"ahler manifolds is said to be {\it Monge-Amp\`ere} if 
$$
(\omega_\X- p^* \omega_\B)^{n+1}\equiv 0,
$$
(in which case we know  $\omega_\X- p^* \omega_\B$ is {\it a Monge-Amp\`ere form}). A proper holomorphic submersion $p:\mc{X}\to \B$ is called a Monge-Amp\`ere fibration if there exists a Monge-Amp\`ere form $\omega$ on $\mc{X}$.
\end{defn}
\begin{rem}
	\begin{itemize}
  \item[(1)] By Proposition \ref{pr:rkf}, for a relative K\"ahler fibration $p: (\mathcal X, \omega)\to\mathcal B$, $d\omega'=0$ if and only if $[V_j, \bar V_k]\equiv 0$ all for $j,k$, where $\omega'=\omega-c(\omega)$.  Thus $\omega'$ is a Monge-Amp\`ere form if and only if the horizontal distribution associated with $\omega$ is integrable. 
  \item[(2)] A relative K\"ahler fibration $p: (\mathcal X, \omega)\to\mathcal B$ is a Monge-Amp\`ere fibration if and only if 
  \begin{align*}
\begin{split}
  0&=\omega^{n+1}=(c(\omega)+\omega_{\mc{X}/\mc{B}})^{n+1}\\
  &=(n+1)\omega_{\mc{X}/\mc{B}}^{n}\wedge c(\omega)+\sum_{i=2}^{n+1}C_{n+1}^i\omega_{\mc{X}/\mc{B}}^{n+1-i}\wedge c(\omega)^i,
 \end{split}
\end{align*}
which is equivalent to $c(\omega)\equiv 0$.
\item[(3)] If $p:(\mc{X},\omega)\to \B$ is a Monge-Amp\`ere form, then the $d$-closed $(1,1)$-form $\int_{\mc{X}/\B}\omega^{n+1}$ vanishes. 
\end{itemize}
\end{rem}

\subsection{Generalized Weil-Petersson metrics}\label{WP metrics}
In this subsection, by using the relative K\"ahler form $\omega$, we shall define two types of generalized Weil-Petersson metrics on the base manifold of a Monge-Amp\`ere fibration.

\begin{defn}\label{de:KST} Let $p: (\mathcal X,  \omega)\to \mathcal B$ be a relative K\"ahler fibration. Let $V_j$ (defined in \eqref{eq:Vj}) be the {horizontal lift} of $\frac{\partial}{\partial t^j}$ with respect to $\omega$. We call
$$
\kappa_j:=(\dbar V_j)|_{X_t}
$$
the {\it $\omega$-Kodaira--Spencer tensor} on $X_t$.
\end{defn}

 From the above definition, one sees that each $\omega$-Kodaira--Spencer tensor $\kappa_j$ is a $\dbar$-closed  $T_{X_t}$-valued $(0,1)$-form on $X_t$. By using the $\omega$-Kodaira--Spencer tensor, the generalized Weil-Petersson metric can be given as follows, see {\cite[Definition 7.1]{FS}}.
 \begin{defn}
 	Let $p: (\mathcal X,  \omega)\to \mathcal B$ be a  relative K\"ahler fibration. We call the following metric on $\mathcal B$ defined by
$$
\left\langle \frac{\partial}{\partial t^j},\frac{\partial}{\partial t^k} \right\rangle_{\rm WP}(t):=\int_{X_t} \langle \kappa_j^h , \kappa_k^h \rangle_{\omega_t} \,\frac{\omega_t^n}{n!}, \ \ \omega_t:= \omega|_{X_t},
$$
the {\it generalized Weil-Petersson metric} on $\mathcal B$, where $\kappa_j^h$ denotes the $\omega_t$ harmonic representative of the Kodaira--Spencer class $[\kappa_j]$.
 \end{defn}
On the other hand, one can take the $L^2$-inner product of the \emph{$\omega$-Kodaira--Spencer tensors} $\kappa_j$ directly (without taking the harmonic projection), which gives the following definition of generalized Weil-Petersson metrics, see \cite[Section 8]{FS}.  
\begin{defn}\label{de:DFWP} Let $p: (\mathcal X,  \omega)\to \mathcal B$ be a  relative K\"ahler fibration. We can define another kind of 
generalized Weil-Petersson metric on $\mathcal B$  by
$$
\left\langle \frac{\partial}{\partial t^j},\frac{\partial}{\partial t^k} \right\rangle_{\mc{W}\mc{P}}(t):=\int_{X_t} \langle \kappa_j, \kappa_k \rangle_{\omega_t} \,\frac{\omega_t^n}{n!}, \ \ \omega_t:= \omega|_{X_t},
$$
where $\kappa_j$ are $\omega$-Kodaira--Spencer tensors. 
\end{defn}

One may note that the generalized  Weil-Petersson  metric $\left\langle\cdot,\cdot\right\rangle_{\mc{WP}}$ is  \emph{bigger} than  $\left\langle\cdot,\cdot\right\rangle_{\rm WP}$. In particular, if the Kodaira-Spencer map is injective, then both kinds of generalized Weil-Petersson metrics must be non-degenerated.
\begin{rem}
	 It is proved in \cite{Wang16} that if the relative cotangent bundle is $(n-1)$-semi-positive, then the bisectional curvature of the \emph{generalized Weil-Petersson metric} is semi-negative. But in general, it is not easy to find such fibrations with  $(n-1)$-semi-positive relative cotangent bundle. The main theme of this paper is to use the \emph{generalized Weil-Petersson metric} $\left\langle\cdot,\cdot\right\rangle_{\mc{WP}}$ to study the curvature properties of the base manifold of a Monge-Amp\`ere fibration.

\end{rem}

\section{Curvature of the generalized Weil-Petersson metric}\label{sec2}

Let $p: (\mc{X},\omega)\to \mc{B}$ be a relative K\"ahler fibration, i.e., $\omega$ is a real and smooth $d$-closed $(1,1)$-form on $\mc{X}$, and is positive on each fiber $X_t:=p^{-1}(t)$.  By $\bar{\p}$-Poincar\'e Lemma, there exists a local weight, say $\phi$, such that 
\begin{align*}
\omega=i\p\b{\p}\phi. 	
\end{align*}
Let $\{t^j, \zeta^{\alpha}\}$ denote a holomorphic local coordinate system on $\mc{X}$ such that $p(t,\zeta)=t$. Then 
\begin{align*}
\omega=i\left(\phi_{\alpha\b{\beta}}d\zeta^{\alpha}\wedge d\b{\zeta}^{\beta}+\phi_{i\b{\beta}}dt^j\wedge d\b{\zeta}^{\beta}+\phi_{\alpha\b{k}}d\zeta^{\alpha}\wedge d\b{t}^k+\phi_{j\b{k}}dt^j\wedge d\b{t}^k\right),
\end{align*}
where $\phi_{A\b{B}}:=\p_A\p_{\b{B}}\phi$. In this section, we will use the summation convention of Einstein. Recall the canonical horizontal lift of $\frac{\p}{\p t^j}$ is given by 
\begin{align*}
V_j:=\frac{\p}{\p t^j}-\phi_{j\b{\beta}}\phi^{\b{\beta}\alpha}\frac{\p}{\p\zeta^{\alpha}},
\end{align*}
and recall the $\omega$-Kodaira-Spencer tensor on $X_t$ is given by
\begin{align*}
\k_j:=(\b{\p}V_j)|_{X_t}.	
\end{align*}
The generalized Weil-Petersson metric  $\left\langle\cdot,\cdot\right\rangle_{\mc{WP}}$ is then defined by 
\begin{align*}
\left\langle \frac{\p}{\p t^j},\frac{\p}{\p t^k}\right\rangle_{\mc{WP}}(t):=\int_{X_t}\langle \k_j,\k_k\rangle_{\omega_t}\frac{\omega^n_t}{n!}, \quad \omega_t=\omega|_{X_t}.	
\end{align*}
Denote 
\begin{align*}
\omega_{\mc{WP}}=iG_{j\b{k}}dt^j\wedge d\b{t}^k,\quad G_{j\b{k}}:=\left\langle \frac{\p}{\p t^j},\frac{\p}{\p t^k}\right\rangle_{\mc{WP}}.
\end{align*}
With respect to $\omega$, recall  that the geodesic curvature form is given by
\begin{align*}
c(\omega)=ic_{j\b{k}}dt^j\wedge d\b{t}^k, \quad c_{j\b{k}}:=\langle V_j, V_k\rangle_{\omega}=\phi_{j\b{k}}-\phi_{j\b{\beta}}\phi^{\alpha\b{\beta}}\phi_{\alpha\b{k}}.
\end{align*}
If each fiber $X_t$ is compact, Fujiki and Schumacher \cite{FS} obtained the following expression on the generalized Weil-Petersson metric $\omega_{\mc{WP}}$, see also \cite[Lemma 3.8 (3.43)]{Wan1} for its proof.
\begin{thm} [{\cite[Theorem 8.1]{FS}}]\label{Expression of WP}The following identity holds
\begin{align}\label{le1}
\omega_{\mc{WP}}=i\int_{\mc{X}/\mc{B}}R^{K_{\mc{X}/\mc{B}}}\wedge \frac{\omega^n}{n!}+\int_{\mc{X}/\mc{B}}\rho c(\omega)\wedge\frac{\omega^n}{n!},
\end{align}
where $R^{K_{\mc{X}/\mc{B}}}=\p\b{\p}\log\det \phi$, $\rho=-\phi^{\alpha\b{\beta}}\p_\alpha\p_{\b{\beta}}\log\det \phi$ is the saclar curvature, $\det \phi:=\det(\phi_{\alpha\b{\beta}})$, $\int_{\mc{X}/\mc{B}}$ denotes fiber integration (see e.g. \cite[Section 2.1]{Sch12} for fiber integration).
	\end{thm} 
 As a corollary, one has
 \begin{cor}
 	If $p:(\mc{X},\omega)\to \mc{B}$ is a Monge-Amp\`ere fibration, then 
 	\begin{align}\label{le2}
\omega_{\mc{WP}}=i\int_{\mc{X}/\mc{B}}R^{K_{\mc{X}/\mc{B}}}\wedge \frac{\omega^n}{n!}.
\end{align}
In particular, $\omega_{\mc{WP}}$ is $d$-closed. 
 \end{cor}

Now we will follow Schumacher's method \cite{Sch} to calculate the curvature of generalized Weil-Petersson metric $\omega_{\mc{WP}}$. 
Let  $T_{X_t}$ denote the holomorphic tangent bundle of $X_t$, and denote by $T_{X_t}^{\mb{C}}=T_{X_t}\oplus\o{T_{X_t}}$ the complexified tangent bundle.
For any two tensors
$$\Phi=\Phi^{A}_Bdx^B\otimes \frac{\p}{\p x^A},\quad \Psi=\Psi^A_Bdx^B\otimes\frac{\p}{\p x^A}\in A^{1}(X_t, T_{X_t}^{\mb{C}})\simeq A^0(X_t, \text{End}(T^{\mb{C}}_{X_t})),$$ where $x^A, x^B$ are taken $\{\zeta^{\alpha}, \b{\zeta}^{\beta}\}$. We define 
\begin{align*}
\Phi\cdot\Psi:=\text{Tr}(\Phi\Psi)=\Phi^A_B\Psi^B_A.
\end{align*}
For any vector field $V$, we denote by $L_V$ the Lie derivative along $V$.  For the tensor $\Phi=\Phi^{A}_Bdx^B\otimes \frac{\p}{\p x^A}\in A^{1}(X_t, T_{X_t}^{\mb{C}})$, one has 
\begin{align}\label{L1}
L_V\Phi=\left(L_V\Phi^{A}_{B}\right)\frac{\p}{\p x^{A}}\otimes dx^{B},
\end{align}
where  
\begin{align}\label{L2}
\begin{split}
	L_V\Phi^{A}_{B}&=V(\Phi^{A}_{B})-\Phi^C_B\frac{\p V^{ A}}{\p x^C}+\Phi^A_C\frac{\p V^{C}}{\p x^{B}}\\
	&=\n_V(\Phi^{A}_{B})-\Phi^{C}_{B}\n_C V^{A}+\Phi^{A}_{C}\n_{B} V^{C}.
	\end{split}
\end{align}
Here $\n_C$ denotes the covariant derivative along $\p/\p x^{C}$ with respect to some Hermitian metric. 
Since Lie derivative commutes with contraction and satisfies  Leibniz's rule for tensors, so 
\begin{align*}
L_V	(\Phi\cdot\Psi)=(L_V\Phi)\cdot\Psi+\Phi\cdot(L_V\Psi). 
\end{align*}
Denote 
\begin{align*}
\k_j=A^{\alpha}_{j\b{\beta}}d\b{\zeta}^{\beta}\otimes \frac{\p}{\p \zeta^{\alpha}},\quad A^{\alpha}_{j\b{\beta}}=-\p_{\b{\beta}}(\phi_{j\b{\gamma}}\phi^{\b{\gamma}\alpha}).
\end{align*}
By a direct calculation, one has 
\begin{align}\label{2.1}
	A^{\alpha}_{j\b{\beta}}=A^{\sigma}_{j\b{\gamma}}\phi^{\b{\gamma}\alpha}\phi_{\sigma\b{\beta}},
\end{align}
(see e.g. \cite[(3.12)]{Wan1}). Then
\begin{align*}
	\langle \k_j,\k_k\rangle_{\omega_t}=A^{\alpha}_{j\b{\beta}}\o{A^{\sigma}_{k\b{\gamma}}}\phi^{\gamma\b{\beta}}\phi_{\alpha\b{\sigma}}=A^{\alpha}_{j\b{\beta}}\o{A^\beta_{k\b{\alpha}}}=\k_j\cdot \o{\k_{k}}.
\end{align*}
The first variation of the generalized Weil-Petersson metric is 
\begin{aligns}\label{2.4}
\frac{\p G_{j\b{k}}}{\p t^l}&=\frac{\p}{\p t^l}\int_{X_t}\k_j\cdot\o{\k_{k}}\frac{\omega^n_t}{n!}	\\
&=\int_{X_t} (L_{V_l}\k_j)\cdot\o{\k_{k}}\frac{\omega^n_t}{n!}+\int_{X_t}\k_j\cdot L_{V_l}\o{\k_{k}}\frac{\omega^n_t}{n!}+\int_{X_t}\k_j\cdot\o{\k_{k}}L_{V_j}\frac{\omega^n_t}{n!}\\
&=\int_{X_t} (L_{V_l}\k_j)\cdot\o{\k_{k}}\frac{\omega^n_t}{n!}+\int_{X_t}\k_j\cdot L_{V_l}\o{\k_{k}}\frac{\omega^n_t}{n!},
\end{aligns}
where the second equality follows from \cite[Lemma 1]{Sch12}, the last equality holds by \cite[Lemma 2.2 (2)]{Sch}. From \cite[Lemma 2.3]{Sch} or  (\ref{L1}), (\ref{L2}), one has
\begin{aligns}\label{2.6}
L_{V_l}\o{\k_{k}}&=L_{V_l}(\o{A^{\beta}_{k\b{\alpha}}} d\zeta^{\alpha}\otimes \frac{\p}{\p\b{\zeta}^{\beta}} )\\
&=-(c_{l\b{k}})^{;\b{\beta}}_{~~\alpha}d\zeta^{\alpha}\otimes \frac{\p}{\p\b{\zeta}^{\beta}} -A^{\gamma}_{l\b{\beta}}\o{A^{\beta}_{k\b{\alpha}}}d\zeta^{\alpha}\otimes \frac{\p}{\p\zeta^{\gamma}}+\o{A^{\beta}_{k\b{\alpha}}}A^{\alpha}_{l\b{\delta}}d\b{\zeta}^{\delta}\otimes\frac{\p}{\p\b{\zeta}^{\beta}}\\
&=-(c_{l\b{k}})^{;\b{\beta}}_{~~\alpha}d\zeta^{\alpha}\otimes \frac{\p}{\p\b{\zeta}^{\beta}}-\k_{l}\o{\k_{k}}+\o{\k_{k}}\k_{l}.
\end{aligns}
Thus 
\begin{aligns}\label{2.2}
	\int_{X_t}\k_j\cdot L_{V_l}\o{\k_{k}}\frac{\omega^n_t}{n!} &=-\int_{X_t}A^{\alpha}_{j\b{\beta}} (c_{l\b{k}})^{;\b{\beta}}_{~~\alpha}\frac{\omega^n_t}{n!}\\
	&=-\int_{X_t} (A^{\alpha}_{j\b{\beta}})_{;\alpha}^{~~;\b{\beta}}c_{l\b{k}}\frac{\omega^n_t}{n!}=\int_{X_t} (V_j\rho) c_{l\b{k}}\frac{\omega^n_t}{n!},
\end{aligns}
where the last equality follows from 
\begin{align*}
	(A^{\alpha}_{j\b{\beta}})_{;\alpha}^{~~;\b{\beta}}&=(A^{\alpha}_{j\b{\beta}})_{;\alpha\gamma}\phi^{\gamma\b{\beta}}=-\phi_{j\b{\sigma};\b{\beta}\alpha\gamma}\phi^{\b{\sigma}\alpha}\phi^{\gamma\b{\beta}}\\
	&=-(\phi_{j\b{\sigma};\alpha\b{\beta}}+R_{\alpha\b{\sigma}\tau\b{\beta}}\phi^{\b{\tau}\b{\delta}}\phi_{j\b{\delta}})_{;\gamma}\phi^{\b{\sigma}\alpha}\phi^{\gamma\b{\beta}}\\
	&=-(\p_j\phi_{\alpha\b{\sigma}})_{;\b{\beta}\gamma}\phi^{\b{\sigma}\alpha}\phi^{\gamma\b{\beta}}-(R_{\alpha\b{\sigma}\tau\b{\beta}}\phi^{\b{\tau}\b{\delta}}\phi_{j\b{\delta}})_{;\gamma}\phi^{\b{\sigma}\alpha}\phi^{\gamma\b{\beta}}\\
	&=-\p_j\p_{\gamma}\p_{\b{\beta}}\log\det \phi \phi^{\gamma\b{\beta}}+(\p_{\tau}\p_{\b{\beta}}\log\det \phi \phi^{\tau\b{\delta}}\phi_{j\b{\delta}})_{;\gamma}\phi^{\gamma\b{\beta}}\\
	&=-\p_j\rho+\p_{\gamma}\p_{\b{\beta}}\log\det \phi \p_j\phi^{\gamma\b{\beta}}+\p_{\tau}\p_{\b{\beta}}\log\det \phi \phi^{\tau\b{\delta}}\phi_{j\gamma\b{\delta}}\phi^{\gamma\b{\beta}}\\
	&\quad+(\p_{\tau}\p_{\b{\beta}}\log\det \phi)_{;\gamma} \phi^{\tau\b{\delta}}\phi_{j\b{\delta}}\phi^{\gamma\b{\beta}}\\
	&=-\p_j\rho+(\p_{\gamma}\p_{\b{\beta}}\log\det \phi)_{;\tau} \phi^{\tau\b{\delta}}\phi_{j\b{\delta}}\phi^{\gamma\b{\beta}}=-V_j\rho.
\end{align*}
On the other hand, by (\ref{L1}) and (\ref{L2}), one has 
\begin{align}\label{2.7}
\begin{split}
	L_{V_l}\k_j&=(L_{V_l}\k_j)^{\alpha}_{\b{\beta}}d\b{\zeta}^{\beta}\otimes\frac{\p}{\p \zeta^{\alpha}}\\&=\left(\p_l(A^{\alpha}_{j\b{\beta}})-\phi_{l\b{\gamma}}\phi^{\b{\gamma}\sigma}A^{\alpha}_{j\b{\beta};\sigma}+A^{\sigma}_{j\b{\beta}}\phi_{l\sigma\b{\gamma}}\phi^{\b{\gamma}\alpha}\right)d\b{\zeta}^{\beta}\otimes\frac{\p}{\p \zeta^{\alpha}}.
	\end{split}
\end{align}
By a direct calculation, one has 
\begin{align}\label{2.13}
	(L_{V_l}\k_j)^{\alpha}_{\b{\beta}}=(L_{V_l}\k_j)^{\tau}_{\b{\delta}}\phi^{\b{\delta}\alpha}\phi_{\tau\b{\beta}}.
\end{align}
In fact, by (\ref{2.1}), one has
\begin{align*}
	(L_{V_l}\k_j)^{\alpha}_{\b{\beta}}&=\p_l(A^{\alpha}_{j\b{\beta}})-\phi_{l\b{\gamma}}\phi^{\b{\gamma}\sigma}A^{\alpha}_{j\b{\beta};\sigma}+A^{\sigma}_{j\b{\beta}}\phi_{l\sigma\b{\gamma}}\phi^{\b{\gamma}\alpha}\\
	&=\p_l(A^{\alpha}_{j\b{\beta}})-A^{\sigma}_{j\b{\gamma}}\phi_{\sigma\b{\beta}}\p_l\phi^{\b{\gamma}\alpha}-(\phi_{l\b{\gamma}}\phi^{\b{\gamma}\sigma}A^{\tau}_{j\b{\delta};\sigma})\phi^{\b{\delta}\alpha}\phi_{\tau\b{\beta}}\\
	&=\p_l A^{\sigma}_{j\b{\gamma}}\phi_{\sigma\b{\beta}}\phi^{\b{\gamma}\alpha}+ A^{\sigma}_{j\b{\gamma}}\p_l \phi_{\sigma\b{\beta}}\phi^{\b{\gamma}\alpha}-(\phi_{l\b{\gamma}}\phi^{\b{\gamma}\sigma}A^{\tau}_{j\b{\delta};\sigma})\phi^{\b{\delta}\alpha}\phi_{\tau\b{\beta}}\\
	&=(\p_l(A^{\tau}_{j\b{\delta}})-\phi_{l\b{\gamma}}\phi^{\b{\gamma}\sigma}A^{\tau}_{j\b{\delta};\sigma}+A^{\sigma}_{j\b{\delta}}\phi_{l\sigma\b{\gamma}}\phi^{\b{\gamma}\tau})\phi^{\b{\delta}\alpha}\phi_{\tau\b{\beta}}\\
	&=(L_{V_l}\k_j)^{\tau}_{\b{\delta}}\phi^{\b{\delta}\alpha}\phi_{\tau\b{\beta}},
\end{align*}
which completes the proof of (\ref{2.13}).
Combining with (\ref{2.1}), we have
\begin{align}\label{2.3}
\int_{X_t} (L_{V_l}\k_j)\cdot\o{\k_{k}}\frac{\omega^n_t}{n!}=\langle L_{V_l}\k_j, \k_{k}\rangle.
\end{align}
Here \begin{align*}\langle\cdot,\cdot\rangle:=\int_{X_t}\langle\cdot,\cdot\rangle_{\omega_t}\frac{\omega^n_{t}}{n!}\end{align*} denotes the  global $L^2$-inner product. Substituting (\ref{2.2}) and (\ref{2.3}) into (\ref{2.4}), we obtain
\begin{prop}\label{prop1}
	Let $p: (\mc{X},\omega)\to \mc{B}$ be a relative K\"ahler fibration with compact fibers.  The first variation of the generalized Weil-Petersson  metric is 
	\begin{align*}
	\frac{\p G_{j\b{k}}}{\p t^l}=	\langle L_{V_l}\k_j, \k_{k}\rangle+\int_{X_t} (V_j\rho) c_{l\b{k}}\frac{\omega^n_t}{n!}.
	\end{align*}
In particular, if $\rho$ is a constant or $\omega$ is a Monge-Amp\`ere form (i.e. $c_{l\b{k}}=0$), then 
\begin{align}\label{2.5}
	\frac{\p G_{j\b{k}}}{\p t^l}=	\langle L_{V_l}\k_j, \k_{k}\rangle=\int_{X_t} (L_{V_l}\k_j)\cdot\o{\k_{k}}\frac{\omega^n_t}{n!}.
	\end{align}
\end{prop}

Now we compute the second variation of the generalized Weil-Petersson metric for a Monge-Amp\`ere fibration. Since $[L_{\b{V}_{m}}, L_{V^l}]=L_{[\b{V}_{m}, V_l]}$ and by (\ref{2.5}), so
\begin{aligns}\label{2.10}
\frac{\p^2 G_{j\b{k}}}{\p t^l\p \b{t}^m}&=\frac{\p}{\p \b{t}^m}	\int_{X_t} (L_{V_l}\k_j)\cdot\o{\k_{k}}\frac{\omega^n_t}{n!}\\
&=\int_{X_t} (L_{\b{V}_m}L_{V_l}\k_j)\cdot\o{\k_{k}}\frac{\omega^n_t}{n!}+\int_{X_t} L_{V_l}\k_j\cdot L_{\b{V}_m}\o{\k_{k}}\frac{\omega^n_t}{n!}\\
&=\int_{X_t} L_{[\b{V}_m, V_l]}\cdot\o{\k_{k}}\frac{\omega^n_t}{n!}+\frac{\p}{\p t^l}\int_{X_t}L_{\b{V}_m}\k_j\cdot\o{\k_{k}}\frac{\omega^n_t}{n!}\\
&\quad-\int_{X_t}L_{\b{V}_m}\k_j\cdot L_{V_l}\o{\k_{k}}\frac{\omega^n_t}{n!}+\int_{X_t} L_{V_l}\k_j\cdot L_{\b{V}_m}\o{\k_{k}}\frac{\omega^n_t}{n!}\\
&=-\int_{X_t}L_{\b{V}_m}\k_j\cdot L_{V_l}\o{\k_{k}}\frac{\omega^n_t}{n!}+\int_{X_t} L_{V_l}\k_j\cdot L_{\b{V}_m}\o{\k_{k}}\frac{\omega^n_t}{n!},
\end{aligns}
  where the last equality holds by (\ref{2.2}) and using \cite[Lemma 2.6]{Sch}, 
  $$[\b{V}_{m},V_l]=-(c_{l\b{m}})^{;\alpha}\frac{\p}{\p\zeta^{\alpha}}+(c_{l\b{m}})^{;\b{\beta}}\frac{\p}{\p\b{\zeta}^{\beta}},$$
which vanishes in the case of Monge-Amp\`ere fibration. 

From (\ref{2.6}), one has
\begin{aligns}\label{2.8}
	\int_{X_t}L_{\b{V}_m}\k_j\cdot L_{V_l}\o{\k_{k}}\frac{\omega^n_t}{n!}&=\int_{X_t}(-\o{\k_{m}}\k_j+\k_j\o{\k_{m}})\cdot(-\k_{l}\o{\k_{k}}+\o{\k_{k}}\k_{l})\frac{\omega^n_t}{n!}\\
	&=-\int_M (\text{Tr}(\o{\k_{m}}\k_j\o{\k_{k}}\k_{l})+\text{Tr}(\k_j\o{\k_{m}}\k_{l}\o{\k_{k}}))\frac{\omega^n_t}{n!}\\
	&=-\langle\o{\k_{m}}\k_j, \o{\k_{l}}\k_k\rangle-\langle\k_j\o{\k_{m}}, \k_k\o{\k_l}\rangle.
\end{aligns}
By (\ref{2.7}) and (\ref{2.13}), one has
\begin{align}\label{2.9}
\int_{X_t} L_{V_l}\k_j\cdot L_{\b{V}_m}\o{\k_{k}}\frac{\omega^n_t}{n!}=\langle L_{V_l}\k_j,  L_{V_m}\k_{k}\rangle.	
\end{align}
Substituting (\ref{2.8}) and (\ref{2.9}) into (\ref{2.10}), we have
\begin{align}\label{2.11}
	\frac{\p^2 G_{j\b{k}}}{\p t^l\p \b{t}^m}=\langle\o{\k_{m}}\k_j, \o{\k_{l}}\k_k\rangle+\langle\k_j\o{\k_{m}}, \k_k\o{\k_l}\rangle+\langle L_{V_l}\k_j,  L_{V_m}\k_{k}\rangle.
\end{align}
Denote by $\operatorname{H}: A^{0,1}(X_t, T_{X_t})\to \text{Span}\{\k_{i}\}$ the orthogonal projection. By Proposition \ref{prop1}, one has
\begin{align}\label{2.12}
G^{p\b{q}}\frac{\p G_{j\b{q}}}{\p t^l}\frac{\p G_{p\b{k}}}{\p\b{t}^m}=G^{p\b{q}}\langle L_{V_l}\k_j,\k_q\rangle \langle\k_p,L_{V_m}\k_k\rangle=\langle\operatorname{H}(L_{V_l}\k_j),\operatorname{H}(L_{V_m}\k_k)\rangle.	
\end{align}
From (\ref{2.11}) and (\ref{2.12}), we obtain
\begin{thm}\label{thm1}
The curvature of generalized Weil-Petersson metric $\omega_{\mc{WP}}$ for a Monge-Amp\`ere fibration  is 
\begin{align*}
R_{j\b{k}l\b{m}}&=-\frac{\p^2 G_{j\b{k}}}{\p t^l\p \b{t}^m}+	G^{p\b{q}}\frac{\p G_{j\b{q}}}{\p t^l}\frac{\p G_{p\b{k}}}{\p\b{t}^m}\\
&=-\langle\o{\k_{m}}\k_j, \o{\k_{l}}\k_k\rangle-\langle\k_j\o{\k_{m}}, \k_k\o{\k_l}\rangle-\langle\operatorname{H}^{\perp}(L_{V_l}\k_j),  \operatorname{H}^{\perp}(L_{V_m}\k_{k})\rangle.
\end{align*}
	Here $\operatorname{H}^{\perp}$ denotes the orthogonal projection from $A^{0,1}(X_t, T_{X_t})$ to  $\operatorname{Span}\{\k_{i}\}^{\perp}$.
\end{thm}
\begin{rem}
For a general relative K\"ahler fibration, we can also obtain the curvature of generalized Weil-Petersson metric  $\left\langle\cdot,\cdot\right\rangle_{\mc{WP}}$. For more details, one can refer to  \cite[Section 4]{WW}.	
\end{rem}

For any two vectors $\xi=\xi^j\frac{\p}{\p t^j},\eta=\eta^j\frac{\p}{\p t^j}$ in $T_t\mc{B}$, we denote 
\begin{align*}
	\k_{\xi}=\k_j\xi^j,\quad \k_{\eta}=\k_j\eta^j.
\end{align*}
From Theorem \ref{thm1}, the holomorphic bisectional curvature satisfies
\begin{align}\label{2.15}
\begin{split}
R(\xi,\o{\xi},\eta,\o{\eta})&:=R_{j\b{k}l\b{m}}\xi^j\b{\xi}^k\eta^l\b{\eta}^m\\
&\leq -\langle\o{\k_{\eta}}\k_\xi, \o{\k_{\eta}}\k_\xi\rangle-\langle\k_\xi\o{\k_{\eta}}, \k_\xi\o{\k_\eta}\rangle\\
&=-2\langle\o{\k_{\eta}}\k_\xi, \o{\k_{\eta}}\k_\xi\rangle.	
\end{split}
\end{align}
Note that 
\begin{align}\label{2.17}
	\langle\o{\k_{\eta}}\k_\xi, \o{\k_{\eta}}\k_\xi\rangle\geq  \frac{1}{n}\left|\sum_{\beta=1}^n(\k_{\eta}\o{\k_{\xi}})^{\beta}_{\beta}\right| ^2=\frac{1}{n}\left|\text{Tr}(\k_{\eta}\o{\k_{\xi}})\right|^2.
\end{align}
In fact, by taking a normal coordinate system around a fixed point, one can assume that $\phi_{\alpha\b{\beta}}=\delta_{\alpha\beta}$ at this point. Hence
\begin{align*}
	\langle\o{\k_{\eta}}\k_\xi, \o{\k_{\eta}}\k_\xi\rangle&=(\o{\k_{\eta}}\k_\xi)^{\b{\gamma}}_{\b{\beta}}(\k_{\eta}\o{\k_{\xi}})^{\tau}_{\alpha}\phi^{\alpha\b{\beta}}\phi_{\tau\b{\gamma}}\\
	&=\sum_{\beta,\gamma=1}^n(\o{\k_{\eta}}\k_\xi)^{\b{\gamma}}_{\b{\beta}}(\k_{\eta}\o{\k_{\xi}})^{\gamma}_{\beta}\geq \sum_{\beta=1}^n|(\k_{\eta}\o{\k_{\xi}})^{\beta}_{\beta}|^2\\
	&\geq \frac{1}{n}\left(\sum_{\beta=1}^n|(\k_{\eta}\o{\k_{\xi}})^{\beta}_{\beta}|\right)^2\geq \frac{1}{n}\left|\sum_{\beta=1}^n(\k_{\eta}\o{\k_{\xi}})^{\beta}_{\beta}\right| ^2=\frac{1}{n}\left|\text{Tr}(\k_{\eta}\o{\k_{\xi}})\right|^2.
\end{align*}
By (\ref{2.17}), we have
\begin{aligns}\label{2.14}
\langle\o{\k_{\eta}}\k_\xi, \o{\k_{\eta}}\k_\xi\rangle&=\int_{X_t}\langle\o{\k_{\eta}}\k_\xi, \o{\k_{\eta}}\k_\xi\rangle\frac{\omega^n_t}{n!}\\
&	\geq \int_{X_t}\frac{1}{n}\left|\text{Tr}(\k_{\eta}\o{\k_{\xi}})\right|^2\frac{\omega^n_t}{n!}\\
&\geq \frac{1}{n}\left(\int_{X_t}\left|\text{Tr}(\k_{\eta}\o{\k_{\xi}})\right|\frac{\omega^n_t}{n!}\right)^2\left(\int_{X_t}\frac{\omega^n_t}{n!}\right)^{-1}\\
&\geq \frac{1}{n}|\langle\eta,\xi\rangle_{\mc{WP}}|^2|X_t|^{-1},
\end{aligns}
where $|X_t|:=\int_{X_t}\frac{\omega^n_t}{n!}$ denotes the volume of each fiber.  From (\ref{2.15}) and (\ref{2.14}), we obtain
\begin{align}\label{2.16}
	R(\xi,\o{\xi},\eta,\o{\eta})\leq -\frac{2}{n}|X_t|^{-1}|\langle\eta,\xi\rangle_{\mc{WP}}|^2.
\end{align}
From (\ref{2.16}), we obtain the holomorphic bisectional curvature of the generalized Weil-Petersson metric is non-positive, and is negative if $\xi$ and $\eta$ are not orthogonal to each other. The holomorphic sectional curvature satisfies
\begin{align*}
	\frac{R(\xi,\o{\xi},\xi,\o{\xi})}{\|\xi\|^4}\leq -\frac{2}{n}|X_t|^{-1}.
\end{align*}
The Ricci curvature satisfies
\begin{align*}
\frac{\operatorname{Ric}(\xi,\o{\xi})}{\|\xi\|^2} &=\frac{\sum_{j=1}^{\dim\mc{B}} R(\xi,\o{\xi},e_j,\o{e_j})}{\|\xi\|^2}\\
&\leq -\frac{2}{n}|X_t|^{-1}\frac{\sum_{j=1}^{\dim\mc{B}} |\langle e_j,\xi\rangle_{\mc{WP}}|^2}{\|\xi\|^2}=-\frac{2}{n}|X_t|^{-1},
\end{align*}
where $\{e_j\}$ is an orthonormal basis with respect to the generalized Weil-Petersson metric. The scalar curvature satisfies
\begin{align*}
\sum_{j=1}^{\dim\mc{B}}\operatorname{Ric}(e_j,\o{e_j})\leq  -\frac{2}{n}|X_t|^{-1}\dim\mc{B}.	
\end{align*}
In a word, we obtain
\begin{cor}
For a Monge-Amp\`ere fibration $p:(\mc{X},\omega)\to \B$, the holomorphic bisectional curvature of generalized Weil-Petersson metric $\omega_{\mc{WP}}$ satisfies 
	\begin{align*}
	R(\xi,\o{\xi},\eta,\o{\eta})\leq -\frac{2}{n}|X_t|^{-1}|\langle\eta,\xi\rangle_{\mc{WP}}|^2.
\end{align*}
for any two vectors $\eta,\xi$ in $T_t\mc{B}$, where $|X_t|:=\int_{X_t}\frac{\omega^n_t}{n!}$ denotes the volume of each fiber. In particular, 
\begin{itemize}
\item[(i)] Holomorphic bisectional curvature is non-positive, and is negative if $\langle\eta,\xi\rangle_{\mc{WP}}\neq 0$;
\item[(ii)] Holomorphic sectional curvature and Ricci curvature are both bounded from above by $-\frac{2}{n}|X_t|^{-1}$, the scalar curvature is bounded from above by $-\frac{2}{n}|X_t|^{-1}\dim\mc{B}$.
\end{itemize}
\end{cor}

\section{Existence of Monge-Amp\`ere fibrations}\label{Sec3}

In this section, we will discuss some existence results on the Monge-Amp\`ere fibrations.

\subsection{Projectively flat vector bundles}\label{sec3.1}

From  \cite[Corollary 1.2.7, Proposition 1.2.8]{Ko3}, a complex vector bundle $E$ is projectively flat if it admits a projectively flat connection, i.e. the curvature satisfies 
\begin{align}\label{3.1}
R=\alpha \text{Id}_E	
\end{align}
for some $2$-form $\alpha$. For a holomorphic Hermitian vector bundle $(E, h)$,  it is called projectively flat if the Chern curvature of $h$ satisfies (\ref{3.1}) for some $(1,1)$-form $\alpha$ (see e.g. the proof of \cite[Proposition 4.1.11]{Ko3} ).
\begin{defn}
 Let $\pi: E\to \mc{B}$ be a holomorphic vector bundle of rank $r$ over a complex manifold $\mc{B}$, we say that the holomorphic vector bundle $E$ admits a {\it projectively flat Hermitian structure} if there exists a Hermitian metric $h$ such that $(E,h)$ is projectively flat. 
\end{defn}
Let $\{s_\alpha\}_{\alpha=1}^r$ denote a local holomorphic frame of $E$, $r=\operatorname{rank}E$, and $\{s^{\alpha}\}_{\alpha=1}^r$ denote the dual frame of $\{s_{\alpha}\}$, $h_{\alpha\b{\beta}}:=h(s_{\alpha}, s_{\beta})$ and $(h^{\b{\beta}\alpha})$ be the inverse matrix of $(h^{\b{\beta}\alpha})$. Then the Chern curvature is given by
\begin{align*}
R&=R^{\alpha}_{\beta j\b{k}}s_{\alpha}\otimes s^{\beta}\otimes dt^j\wedge d\b{t}^k\\
&=	h^{\b{\gamma}\alpha}R_{\beta\b{\gamma}j\b{k}}s_{\alpha}\otimes s^{\beta}\otimes dt^j\wedge d\b{t}^k\\
&=h^{\b{\gamma}\alpha}(-\p_j\p_{\b{k}}h_{\beta\b{\gamma}}+\p_jh_{\beta\b{\sigma}}\p_{\b{k}}h_{\tau\b{\gamma}}h^{\b{\sigma}\tau})s_{\alpha}\otimes s^{\beta}\otimes dt^j\wedge d\b{t}^k\in A^{1,1}(\B,\operatorname{End}E).
\end{align*}
The Ricci curvature is given by
\begin{align*}
	\text{Ric}:=\text{Tr}R=\b{\p}\p\log\det h,
\end{align*}
which is a $d$-closed $(1,1)$-form on $\mc{B}$. If $(E,h)$ is projectively flat, i.e. it satisfies (\ref{3.1}), by taking trace to both sides of (\ref{3.1}), then 
$
\alpha=\frac{1}{r}\text{Ric}. 	
$
Thus, $(E,h)$ is projectively flat if and only if 
\begin{align}\label{3.2}
R=\frac{1}{r}\text{Ric}\cdot\text{Id}_E.	
\end{align}
Let $P(E):=(E-\{0\})/\mb{C}^*$ be the projectivization of the vector bundle $E$, and consider the projective bundle fibration $p: P(E)\to\mc{B}$. 
\begin{prop}\label{prop2}
If $\pi:E\to\mc{B}$ admits a projectively flat Hermitian structure, then 
$p:P(E)\to \mc{B}$ is a Monge-Amp\`ere fibration.	
\end{prop}
\begin{proof}
With respect to the local frame $\{s_{\alpha}\}_{\alpha=1}^r$ of $E$, we denote by 
\begin{align*}
(t; v)=(t^1,\cdots, t^{\dim \mc{B}}; v^1,\cdots, v^r)	
\end{align*}
the local holomorphic coordinates of the complex manifold $E$, which represents the point $v^{\alpha}s_{\alpha}\in E$. Then one can define a norm on $E$ by
\begin{align*}
	H(v):=h(v^{\alpha}s_{\alpha},v^{\beta}s_{\beta})=h_{\alpha\b{\beta}}v^{\alpha}\b{v}^{\beta}. 
\end{align*}
 From \cite[Lemma 1.3]{FLW}, one has 
 \begin{align}\label{3.3}
 \p\b{\p}\log H=-R_{\alpha\b{\beta}j\b{k}}\frac{v^{\alpha}\b{v}^{\beta}}{H}dz^j\wedge d\b{z}^k+\frac{\p^2\log H}{\p v^{\alpha}\p\b{v}^{\beta}}\delta v^{\alpha}\wedge \delta \b{v}^{\beta},	
 \end{align}
where $\delta v^{\alpha}:=dv^{\alpha}+v^{\beta}h^{\b{\gamma}\alpha}\p_j h_{\beta\b{\gamma}}dt^j$. By condition, $(E,h)$ is projectively flat, i.e. it satisfies (\ref{3.2}), so 
\begin{align}\label{3.4}
R_{\alpha\b{\beta}j\b{k}}dz^j\wedge d\b{t}^k=\frac{1}{r}\text{Ric}\cdot h_{\alpha\b{\beta}}.	
\end{align}
Substituting (\ref{3.4}) into (\ref{3.3}), one has 
\begin{align}\label{3.5}
\p\b{\p}\log H=-\frac{1}{r}\text{Ric}+\frac{\p^2\log H}{\p v^{\alpha}\p\b{v}^{\beta}}\delta v^{\alpha}\wedge \delta \b{v}^{\beta}.
\end{align}
Now we define the following $d$-closed real $(1,1)$-form on $P(E)$ by
\begin{align*}
\omega:=i(\p\b{\p}\log H+\frac{1}{r}\text{Ric}).
\end{align*}
Then $\omega$ is a relative K\"ahler form. Indeed,   for any $t\in \mc{B}$, 
 by taking a normal coordinates system around  $t$, $h_{\alpha\b{\beta}}(t)=\delta_{\alpha\b{\beta}}$, then 
\begin{align*}
\omega|_{P(E_{t})}=i(\p\b{\p}\log H+\frac{1}{r}\text{Ric})|_{P(E_{t})}=i\p\b{\p}\log \sum_{\alpha=1}^r|v^{\alpha}|^2>0,
\end{align*}
which is exactly the Fubini-Study metric on $P(E_{t})=\mb{P}^{r-1}$, so we conclude that $\omega$ is relative K\"ahler. From (\ref{3.5}), one has
\begin{align*}
\omega=	i\frac{\p^2\log H}{\p v^{\alpha}\p\b{v}^{\beta}}\delta v^{\alpha}\wedge \delta \b{v}^{\beta},
\end{align*}
 which vanishes along the tautological direction, i.e. $\frac{\p^2\log H}{\p v^{\alpha}\p\b{v}^{\beta}}v^{\alpha}\b{v}^{\beta}=0$. It follows that $\omega^r=0$. Thus $\omega$ is a Monge-Amp\`ere form, and $p: P(E)\to \mc{B}$ is a Monge-Amp\`ere fibration.
\end{proof}
 Let $p:(P(E),\omega)\to \mc{B}$ be a Monge-Amp\`ere fibration over a compact K\"ahler manifold $\mc{B}$. Denote by $\omega_{\mc{B}}$  a K\"ahler metric on $\mc{B}$, by taking a large $C>0$, one concludes that   $\omega+Cp^*\omega_{\mc{B}}$ is a K\"ahler metric on $P(E)$, so $P(E)$ is a compact K\"ahler manifold. Let $\mc{O}_{P(E)}(1)$ denote the hyperplane line bundle over $P(E)$. Then 
 \begin{prop}
 There exist a constant $k\in \mb{R}$ and a $d$-closed 	real $(1,1)$-form $\alpha$ on $\mc{B}$ such that 
 \begin{align}
[\omega]=kc_1(\mc{O}_{P(E)}(1))+[p^*\alpha].
\end{align}
Here $[\bullet]$ denotes the de Rham cohomology class. 
 \end{prop}
 \begin{proof}
Note that the de Rham cohomology class of $P(E)$ satisfies 
 \begin{align*}
 H^*_{dR}(P(E),\mb{R})=H^*_{dR}(\mc{B},\mb{R})[x]/(x^r + c_1(E)x^{r-1} +\cdots+ c_r(E)),	
 \end{align*}
where $x=c_1(\mc{O}_{P(E)}(1))$ (see e.g. \cite[(20.7)]{BT}), so 
\begin{align*}
H^2_{dR}(P(E),\mb{R})=p^*H^2_{dR}(\mc{B}, \mb{R})\oplus \mb{R}x.	
\end{align*}
Let $H^*_{dR}(P(E),\mb{C})$ denote the de Rham cohomology with complex coefficients. By Hodge decomposition theorem (see e.g. \cite[Theorem 5.1]{Wells}), one has
\begin{align*}
	H^2_{dR}(P(E),\mb{C})=H^{2,0}_{\b{\p}}(P(E))\oplus H^{1,1}_{\b{\p}}(P(E)) \oplus H^{0,2}_{\b{\p}}(P(E))
\end{align*}
where $H^{*,*}_{\b{\p}}(P(E))$ denotes the Dolbeault cohomology.
Since $x\in H^2_{dR}(P(E),\mb{R})\cap H^{1,1}_{\b{\p}}(P(E))$,  so 
\begin{align*}
H^2_{dR}(P(E),\mb{R})\cap H^{1,1}_{\b{\p}}(P(E))&=p^*H^2_{dR}(\mc{B}, \mb{R})\cap H^{1,1}_{\b{\p}}(P(E))\oplus \mb{R}x\\
&=p^*(H^2_{dR}(\mc{B}, \mb{R})\cap H^{1,1}_{\b{\p}}(\mc{B}))\oplus \mb{R}x,
\end{align*}
where the last equality follows from the Hodge decomposition theorem for 
the compact K\"ahler manifold $\mc{B}$.
Since  $[\omega]\in H^2_{dR}(P(E),\mb{R})\cap H^{1,1}_{\b{\p}}(P(E))$, and note that any element in  $H^2_{dR}(\mc{B}, \mb{R})\cap H^{1,1}_{\b{\p}}(\mc{B})$ is represented by a  $d$-closed real $(1,1)$-form on $\mc{B}$, so 
 \begin{align*}
[\omega]=kx+[p^*\alpha]=kc_1(\mc{O}_{P(E)}(1))+[p^*\alpha]
\end{align*}
for some $k\in \mb{R}$ and some $d$-closed real $(1,1)$-form $\alpha$ on $\mc{B}$. 
 \end{proof}

Since $\omega$ is a relative K\"ahler form, so $k>0$.  By the $\p\b{\p}$-lemma for compact K\"ahler manifolds (see e.g. \cite[Proposition 1.7.24]{Ko3}), there exists a metric  $e^{-\psi}$ on $\mc{O}_{P(E)}(1)$ such that its curvature satisfies 
\begin{align*}
i\p\b{\p}\psi=\frac{1}{k}(\omega-p^*\alpha).
\end{align*}
By the condition $\omega^r=0$, the geodesic curvature form $c(\psi)$ satisfies 
\begin{align}\label{3.7}
 c(\psi):=c(i\p\b{\p}\psi)=-\frac{1}{k}p^*\alpha.	 
 \end{align}
Now we denote
\begin{align}\label{3.6}
	L:=\mc{O}_{P(E)}(1)\otimes K_{P(E)/\mc{B}}^{-1}=\mc{O}_{P(E)}(r+1)\otimes p^*\det E,
\end{align}
where the second equality follows from \cite[Proposition 2.2]{Kob1}. Since 
\begin{align*}
	c_1(\det E)=-p_*\left(c_1(\mc{O}_{P(E)}(1))^r\right)
\end{align*}
(see e.g. \cite[Section 3.2]{Fulton}), so there exists a metric $h_1$ on $\det E$ such that 
\begin{align}\label{3.8}
c_1(\det E, h_1)=-\int_{P(E)/\mc{B}}\left(\frac{i}{2\pi}\p\b{\p}\psi	\right)^r=-\frac{r}{(2\pi)^r}\int_{X_t}c(\psi)(i\p\b{\p}\psi)^{r-1}_{|X_t}=\frac{r\alpha}{2\pi k},
\end{align}
where the last equality follows from (\ref{3.7}) and noting $\int_{X_t}(\frac{i}{2\pi}\p\b{\p}\psi)^{r-1}_{|_{X_t}}=1$.
From  (\ref{3.6}), the induced metric on $L$ is 
\begin{align*}
e^{-\phi}=e^{-(r+1)\psi}\cdot p^*h_1. 	
\end{align*}
The curvature of $e^{-\phi}$ is 
\begin{align}\label{3.9}
\p\b{\p}\phi=(r+1)\p\b{\p}\psi+p^*\b{\p}\p\log h_1.	
\end{align}
By (\ref{3.7}), (\ref{3.8}) and (\ref{3.9}), one has
\begin{aligns}\label{3.10}
c(\phi) &=(r+1)c(\psi)+ip^*\b{\p}\p\log h_1	\\
&=(r+1)(-\frac{1}{k}p^*\alpha)+2\pi  p^*c_1(\det E, h_1)\\
&=-\frac{1}{k}p^*\alpha.
\end{aligns}
By \cite[Lemma 5.37]{Shiff}, one knows that
\begin{align*} 
E^*=p_*(\mc{O}_{P(E)}(1))=p_*(L\otimes K_{P(E)/\mc{B}}).	
\end{align*}
Following Berndtsson (cf. \cite{Bern2,  Bern4}), one can define the following $L^2$-metric on the direct image bundle $E^*$: for any $u\in E_{t}^*\equiv H^0(X_t, (L\otimes K_{P(E)/\mc{B}})|_{X_t})$, $t\in \mc{B}$, then
\begin{align}\label{L2 metric}
\|u\|^2=\int_{X_t}|u|^2e^{-\phi}. 	
\end{align}
Note that $u$ can be written locally as $u=f dv\wedge e=fdv^1\wedge \cdots\wedge dv^n\otimes e$, where $e$ is a local holomorphic frame for $L|_{X_t}$, and so locally
$$|u|^2e^{-\phi}:=i^{n^2}|f|^2 |e|^2dv\wedge d\b{v}=i^{n^2}|f|^2 e^{-\phi}dv\wedge d\b{v}.$$
\begin{thm}[{\cite[Theorem 1.2]{Bern4}}]
\label{thm4} For any $t\in \mc{B}$ and let $u\in E_{t}^*$, one has
\begin{align}\label{cur}
\langle iR^{E^*}u,u\rangle=\int_{X_t}c(\phi)|u|^2e^{-\phi}+\langle(1+\Box')^{-1}\k_j\cdot u,\k_k\cdot u\rangle i dt^j\wedge d\b{t}^k,
\end{align}
where $R^{E^*}$ denotes the curvature of the Chern connection on $E^*$ with respect to the $L^2$ metric defined above, here $\Box'=\n'\n'^*+\n'^*\n$ is the Laplacian on $L|_{X_t}$-valued forms on $X_t$ defined by the $(1,0)$-part of the Chern connection on $L|_{X_t}$.
\end{thm}
Let $\{u_\alpha\}, 1\leq \alpha\leq r$, be a local holomorphic frame of $E^*$, and  set
$$G_{\alpha\b{\beta}}=\langle u_\alpha, u_\beta\rangle=\int_{X_t}u_\alpha \o{u_\beta}e^{-\phi}.$$
By taking trace to both sides of (\ref{cur}) and using (\ref{3.10}), we have 
\begin{align}\label{3.11}
i\text{Ric}^{E^*}=-\frac{r}{k}\alpha+\langle(1+\Box')^{-1}\k_j\cdot u_\alpha,\k_k\cdot u_\beta\rangle G^{\alpha\b{\beta}} i dt^j\wedge d\b{t}^k \geq -\frac{r}{k}\alpha,
\end{align}
where the above equality holds if and only if $\k_j=0$ for all $1\leq j\leq \dim\mc{B}$. From (\ref{3.8}), one has
 \begin{align}\label{3.12}
 [i\text{Ric}^{E^*}]=2\pi c_1(E^*)=\left[-\frac{r}{k}\alpha\right].	
 \end{align}
Combining (\ref{3.11}) with (\ref{3.12}) shows that  $i\text{Ric}^{E^*}= -\frac{r}{k}\alpha$ and thus
\begin{align}\label{3.13}
	\k_j\equiv 0
\end{align}
on $P(E)$. Since the generalized Weil-Petersson metrics with respect to $\omega$ and $i\p\b{\p}\phi$ are the same, so $\omega_{\mc{WP}}\equiv 0$ on $\mc{B}$. Substituting (\ref{3.13}) into (\ref{cur}), we get
\begin{align*}
\langle i R^{E^*}u,u\rangle=\int_{X_t}c(\phi)|u|^2e^{-\phi}=-\frac{\alpha}{k}\|u\|^2,
\end{align*}
which is equivalent to $ R^{E^*}=i\frac{\alpha}{k}\text{Id}_{E^*}$. Thus, with respect to the dual metric of the $L^2$-metric (\ref{L2 metric}), the Chern curvature $R^E$ is given by
\begin{align*}
R^E=-i\frac{\alpha}{k}\text{Id}_{E}, 	
\end{align*}
which implies that $E$ is projectively flat. 
\begin{thm}\label{thm2}
If $p:(P(E),\omega)\to \mc{B}$ is a Monge-Amp\`ere fibration over a compact K\"ahler manifold $\mc{B}$, then $E$ admits a projectively flat Hermitian structure, and $\omega_{\mc{WP}}\equiv 0$ on $\mc{B}$. 	
\end{thm}

From \cite[(2.3.4), (2.3.5) and Proposition 2.3.1 (b)]{Ko3}, we obtain
\begin{cor}
If $p:P(E)\to \mc{B}$ is a Monge-Amp\`ere fibration over a compact K\"ahler manifold $\mc{B}$, then
\begin{itemize}
\item[(i)] $c(E)=\left(1+\frac{c_1(E)}{r}\right)^r$;
\item[(ii)] $\operatorname{ch}(\operatorname{End}(E))=r^2$.
\end{itemize}
\end{cor}
For the case of $\mc{B}$ is a compact Riemann surface, $\dim \mc{B}=1$. Put
\begin{align*}
\mu(E)=\frac{\int_{\mc{B}}c_1(E)}{\text{rank}(E)}.	
\end{align*}
Recall that $E$ is said to be stable (resp. semi-stable) in the sense of Mumford if for every proper subbundle $E'$ of $E$, $0<\text{rank}(E')<\text{rank}(E)$, we have 
\begin{align}
\mu(E')<\mu(E),\quad (resp. \  \ \mu(E')\leq \mu(E)).	
\end{align}
 $E$ is called polystable if $E=\oplus E_i$ with $E_i$ stable vector bundles all of the same slope $\mu(E)=\mu(E_i)$, see e.g. \cite[Section 4.B]{Huy}.   Thus

\begin{thm}\label{thm6.1}  Let $E$ be a holomorphic vector bundle over a compact K\"ahler manifold $\B$.  Let $P (E):= (E-\{0\})/\mathbb C^*$ be the projectivization of $E$. Then the following are equivalent:
\begin{itemize}
\item[1)]$E$ admits a projectively flat Hermitian structure;
\item[2)] $p:P(E)\to \B$ is a Monge-Amp\`ere fibration.
\end{itemize}
For the case of $\dim \B=1$, both are equivalent to the polystability of  $E$.
\end{thm}
 \begin{proof}
 Now it suffices to prove the last part. Assume that  $\dim \mc{B}=1$, i.e. $\mc{B}$ is a compact Riemann surface. 
By \cite[Proposition 5.2.3]{Ko3}, $(E,h)$ is projectively flat if and only if $(E,h)$ is weak Hermitian-Einstein , i.e. 
  $\Lambda_{\omega_{\mc{B}}}R^E=\varphi \text{Id}_E$ for some function $\varphi$. By a conformal change (see e.g. \cite[Proposition 4.2.4]{Ko3}), $E$ admits a weak Hermitian-Einstein metric if and only if $E$ admits a Hermitian-Einstein metric. Thus, $E$ admits a Hermitian-Einstein metric if and only if $E$ admits a projectively flat Hermitian metric,  
 which is equivalent to that  $p: P(E)\to \mc{B}$ is a Monge-Amp\`ere fibration. All are equivalent to the polystability of $E$ (see e.g. \cite[Theorem 4.B.9]{Huy}). The proof is complete.	
 \end{proof}

\begin{rem}
In \cite{Aikou}, T. Aikou considered the projectively flat holomorphic vector bundle from the view of complex Finsler geometry, and proved that $E$ admits a projectively flat Hermitian metric if and only if the projective bundle $p:P(E)\to \mc{B}$ is a flat K\"ahler fibration (see \cite[ Theorem 3.2]{Aikou}), where 
a K\"ahler fibration \(p: \mathcal{X} \rightarrow \B\) with a smooth family of K\"ahler metrics $\{\Pi_z\}_{z\in \B}$ is said to be flat if, at each point \(z \in \B\),
there exists an open neighborhood \(U\) of \(z\) so that we can choose K\"ahler potentials for \(\Pi_{z}\)
which is independent of \(z \in U \), see \cite[Definition 1.2]{Aikou}. 
 Combining with Proposition \ref{prop1} and Theorem \ref{thm1}, in the case that $\mc{B}$ is a compact K\"ahler manifold, the projective bundle $p:P(E)\to \mc{B}$ is a Monge-Amp\`ere fibration if and only if it is a flat K\"ahler fibration.
\end{rem}

\begin{rem}

After our paper \cite{WW} was submitted to arXiv, by using the negativity of direct image bundles \cite[Section 3]{Bern3},  S. Finski \cite[Theorem 5.1]{Finski} obtained another kind of description  of the projectively flat holomorphic vector bundles, i.e., $E$ admits a projectively flat Hermitian structure if and only if the class $$\Lambda_E:=c_{1}\left(\mathcal{O}_{P\left(E^{*}\right)}(1)\right)-\frac{1}{r } p^{*} c_{1}(E)$$ is semi-positive. In fact, if $E$ admits a projectively flat Hermitian structure, so is $E^*$. By  \cite[Proposition 6.2, (6.6)]{WW}, one knows that
$\Lambda_{E}$
is semi-positive. Conversely, if $\Lambda_E$ is semi-positive,  let $\alpha$ be a semi-positive form in the class $\Lambda_E$, then 
\begin{align*}
\int_{P(E^*)}\alpha^{r}\wedge p^*\omega_0^{m-1}=\int_{P(E^*)} 	\Lambda_E^{r}\wedge p^*\omega_0^{m-1}\geq 0,
\end{align*}
where $\omega_0$ is a K\"ahler form on $\B$, $\dim \mc{B}=m$. On the other hand, 
\begin{align*}
\int_{P(E^*)}	\Lambda_E^{r}\wedge p^*\omega_0^{m-1} &=\int_{P(E^*)}c_{1}\left(\mathcal{O}_{P\left(E^{*}\right)}(1)\right)^r\wedge p^*\omega_0^{m-1}\\
&\quad-\int_{P(E^*)}c_{1}\left(\mathcal{O}_{P\left(E^{*}\right)}(1)\right)^{r-1}\wedge p^*c_1(E)\wedge p^*\omega_0^{m-1}\\
&=\int_{\B}c_1(E)\wedge \omega_0^{m-1}-\int_{\B}c_1(E)\wedge \omega_0^{m-1}=0,
\end{align*}
which follows that $\alpha^r\wedge p^*\omega_0^{m-1}=0$ since $\alpha$ is semi-positive, which is equivalent to $\alpha^r=0$, i.e. $\alpha$ is a Monge-Amp\`ere form. By \cite[Theorem B]{WW} or Theorem \ref{thm6.1}, $E$ admits a projectively flat Hermitian structure.  	
\end{rem}

\subsection{Infinite rank flat Higgs bundles}\label{sec3.2}

Firstly, we will recall the notion of quasi-vector bundles, and one can refer to an early version of \cite{BPW}. 

\begin{defn}[Quasi-vector bundle] Let $A:=\{A_t\}_{t\in \B}$ be a family of $\mathbb C$-vector spaces over a smooth manifold $\B$. Let $\Gamma$ be a $C^{\infty}(\B)$-submodule of the space of all sections of $A$. We call $\Gamma$ a smooth quasi-vector bundle structure on $V$ if each vector of the fiber $A_t$ extends to a section in $\Gamma$ locally near $t$.
\end{defn}

Let $p:(\X,\omega) \to \B$ be a relative K\"ahler fibration. Let $E$ be a holomorphic vector bundle over $\X$ with smooth Hermitian metric $h_E$. We write
$$
X_t:=p^{-1}(t), \ \ E_t:= E_{X_t}, \ \ h_{E_t}:=h_E|_{E_t}.
$$
For each $t\in \B$, denote by $\mathcal A^{p,q}(E_t)$ the space of all smooth $E_t$-valued $(p,q)$-forms on $X_t$. Put
$$
\mathcal A^{p,q}:=\{\mathcal A^{p,q}(E_t)\}_{t\in \B}.
$$
Denote by $\mathcal A^{p,q}(E)$ the space of smooth $E$-valued $(p,q)$-forms on $\X$. Let us define
\begin{align}\label{Gamma}
\Gamma^{p,q}:=\{u: t\mapsto u^t \in \mathcal A^{p,q}(E_t): \exists \ \mathbf{u} \in \mathcal A^{p,q}(E), \ \mathbf u |_{X_t}=u^t, \ \forall \ t\in \B\}.
\end{align}
We call $\mathbf u$ a \emph{smooth representative} of $u\in \Gamma^{p,q}$. Since $p$ is a proper smooth submersion, we know that each $\Gamma^{p,q}$ defines a quasi-vector bundle structure on $\mathcal A^{p,q}$. Consider
$$
(\mathcal A^k, \Gamma^k):=\oplus_{p+q=k} (\mathcal A^{p,q}, \Gamma^{p,q}).
$$
We know that the fiber of $\mathcal A^k$ can be written as
$$
\mathcal A^k(E_t)=\oplus_{p+q=k} \mathcal A^{p,q}(E_t),
$$
which is the space of all $E$-valued smooth $k$-forms on $X_t$. For every $u\in \Gamma^k$, let us define
\begin{align}\label{Lie derivative connection}
\nabla u:=\sum dt^j \otimes [d^E, \delta_{V_j}] \mathbf u+ \sum d\bar t^j \otimes [d^E, \delta_{\bar V_j}] \mathbf u,
\end{align}
where each $V_j$ denotes the horizontal lift of  $\partial /\partial t^j$ with respect to $\omega$ and
$$
d^E:=\dbar+\partial^E,
$$
denotes the Chern connection on $(E, h_E)$.

\begin{defn}  In this paper we shall identify $u$ with its smooth representative $\mathbf u$. We call $\nabla$ the Lie derivative connection on $(\mathcal A^k, \Gamma^k)$ with respect to $\omega$.
\end{defn}

For each $p,q$ with $p+q=k$, $\nabla$ induces a connection, say $D$, on  $(\mathcal A^{p,q}, \Gamma^{p,q})$. For bidegree reason, we have
\begin{align}\label{Chern connection}
D u:=\sum dt^j \otimes [\partial^E, \delta_{V_j}] \mathbf u+ \sum d\bar t^j \otimes [\dbar, \delta_{\bar V_j}] \mathbf u, \qquad \forall \ u\in \Gamma^{p,q}.
\end{align}
The associated second fundamental form can be written as
$$
(\nabla-D) u=\sum dt^j \otimes \kappa_j \cdot \mathbf u+ \sum d\bar t^j \otimes \overline{\kappa_j}\cdot \mathbf u,
$$
where each 
$$
\kappa_j : \mathbf u\mapsto \kappa_j \cdot \mathbf u,
$$ 
denotes the action of the Kodaira--Spencer tensor $\kappa_j$ on $u$.

\begin{defn} We call 
$$
\theta:= \sum dt^j \otimes \kappa_j,
$$
the Higgs field associated to $(\mathcal A^k, \Gamma^k, \omega)$.
\end{defn}

By Theorem 5.6 in \cite{Wang17} (or an early version of \cite{BPW}), we know that

\begin{prop}\label{th:122} $D$ defines a Chern connection on each $(\mathcal A^{p,q}, \Gamma^{p,q})$ and each $\overline{\kappa_j}=\kappa_j^*$.
\end{prop}

The curvature of the Lie derivative connection is 
\begin{equation}\label{eq:curvature-A}
\nabla^2 u =\sum (dt^j \wedge d\bar t^k)\otimes  [ [d^E, \delta_{V_j}],  [d^E, \delta_{\bar V_k}]] \mathbf u.
\end{equation}
For bidegree reason, it gives the following curvature formula for the induced Chern connection
\begin{equation}\label{eq:curvature-C}
D^2 u= \nabla^2 u- \sum (dt^j \wedge d\bar t^k)\otimes [\kappa_j, \overline{\kappa_{k}}]\cdot  \mathbf u.
\end{equation}
Together with the following Lie derivative identity (see Proposition 4.2 in \cite{Wang15})
\begin{equation}\label{eq:curvature-B}
 [ [d^E, \delta_{V_j}],  [d^E, \delta_{\bar V_k}]] \mathbf u= [d^E, \delta_{[V_j, \bar V_k]}]  \mathbf u +\Theta^E(V_j ,\bar V_k) \mathbf u,
\end{equation}
where $
\Theta^E:=(d^E)^2$
denotes the Chern curvature of $(E, h_E)$, \eqref{eq:curvature-C} and  \eqref{eq:curvature-B} imply

\begin{thm}\label{th:curvature} For every $u\in \Gamma^{p,q}$, write
$$
D^2 u=\sum (dt^j \wedge d\bar t^k)\otimes  \Theta_{j\bar k} u,
$$
then the Chern curvature operators $\Theta_{j\bar k}$ satisfy 
$$
(\Theta_{j\bar k} u, u)=( [d^E, \delta_{[V_j, \bar V_k]}]  \mathbf u, u)+(\Theta^E(V_j ,\bar V_k) \mathbf u, u)+ (\kappa_j u, \kappa_k u)-(\overline{\kappa_{k}} u, \overline{\kappa_{j}}u).
$$
\end{thm}

\begin{prop}\label{pr: flat-higgs} Let $p:(\X,\omega) \to \B$ be a Monge-Amp\`ere fibration. If $\Theta^E\equiv 0$  then
\begin{itemize}
\item [i)] $\nabla^2=0$;
\item [ii)] $\theta^2=0$;
\item [iii)] $D\theta+\theta D=0$. 
\end{itemize}
In particular, each  $(\mathcal A^k, \Gamma^k, D, \theta)$ is an infinite rank flat Higgs bundle.
\end{prop}

\begin{proof} Since the total degree of the Kodaira--Spencer tensor is zero, $\theta^2=0$ is always true. Moreover
$$
D^{1,0}\theta+\theta D^{1,0}=0
$$
follows from $[V_j, V_k]\equiv 0$, which is true for every relative K\"ahler fibration.  Assume further that $\omega$ is a Monge-Amp\`ere form, then we have
$$
[V_j, \overline{V_k}]\equiv 0
$$
by Proposition \ref{pr:rkf}, which gives
$$
D^{0,1}\theta+\theta D^{0,1}=0  \ \ \text{i.e.} \ \theta \ \text{is holomorphic},
$$
and (by \eqref{eq:curvature-B} and \eqref{eq:curvature-A})
$$
\nabla^2= \sum (dt^j \wedge d\bar t^k)\otimes \Theta^E(V_j, \overline{V_k}).
$$
Thus $\nabla^2=0$ if one further assumes that $\Theta^E\equiv 0$.
\end{proof}

\begin{thm}
A relative K\"ahler fibration $p: (\X,\omega) \to \B$ is a Monge-Amp\`ere fibration if and only if the following associated infinite rank Higgs bundle 
$$
(\mc{A},\Gamma, D,\theta)
$$
is Higgs-flat, where each fiber $\mathcal A_t$ denotes the space of smooth differential forms on $X_t$.\end{thm}
\begin{proof}
By taking $E$ to be a trivial bundle, then 
 the bundle $\mathcal A$ is precisely $\oplus_{k=0}^{2n} \mathcal A^k$. Thus if $p: (\X, \omega) \to \B$ is a Monge-Amp\`ere fibration, then Proposition \ref{pr: flat-higgs} implies that $\mathcal A$ is Higgs flat. On the other hand, since
$$
\nabla^2  =\sum (dt^j \wedge d\bar t^k)\otimes   [d, \delta_{[V_j, \overline{V_k}]}], 
$$
we know that if  $\mathcal A$ is Higgs flat, then  $\nabla^2 \equiv 0$ gives
$$
[d, \delta_{[V_j, \overline{V_k}]}] u\equiv 0
$$
on fibers for all smooth form $u$ on $\X$. Take $u$ to be an arbitrary smooth function, we get
$$
[d, \delta_{[V_j, \overline{V_k}]}] u=[V_j, \overline{V_k}] u=0,
$$
which implies $[V_j, \overline{V_k}] \equiv 0$. Thus $\omega$ is a Monge-Amp\`ere form by Proposition \ref{pr:rkf}. The proof is complete.
\end{proof}

\section{Examples of Monge-Amp\`ere fibrations}

In this section, we will introduce some examples of Monge-Amp\`ere fibrations, which are also the motivations for studying such kinds of fibrations.

\subsection{Family of elliptic curves}  For each $t$ in the upper half plane $\mathbb H:=\{t\in\mathbb C: {\rm Im}\, t>0\}$, consider the the following elliptic curve (one dimensional torus) 
$$
X_t:=\mathbb C/{(\mathbb Z+t\mathbb Z)}.
$$
There is a canonical diffeomorphism from each $X_t$ to a fixed elliptic curve, say $X_i$. In fact, the $\mathbb R$-linear quasi-conformal mapping
$
f^t:    \mathbb C\to \mathbb C
$
defined by
\begin{equation}\label{eq:5.1}
f^t(1)=1, \ \ f^t(t)=i,
\end{equation}
naturally induces a map, still denoted by $f^t$, from $X_t$ to $X_i$. A direct computation gives
$$
f^t(\zeta)=z=\frac{i-\bar t}{t-\bar t} \,\zeta+\frac{t-i}{t-\bar t}\, \overline{\zeta}.
$$
Now $\{f^t\}_{t\in\mathbb H}$ defines a smooth trivialization of 
$
\X:=\{X_t\}_{t\in\mathbb H} \simeq (\mathbb H\times \mathbb C)/\mathbb Z^2$
as follows
$$
f: \X \to \mathbb H\times X_i, \ \ \  f(t,\zeta):=(t, f^t(\zeta)).
$$
The natural K\"ahler form  $i \,dz \wedge d\bar z $ on $\mathbb C$ induces a K\"ahler form on $X_i$, thus a relative K\"ahler form, say $\omega_i$ on $\mathbb H\times X_i$. Consider its pull back, say $
\omega:=f^* \omega_i,
$ on $\X$,
we have

\begin{prop} $\omega$ is a Monge-Amp\`ere form on the following canonical  fibration
$$
p: \mathcal X \to \mathbb H, \ \ \ p(X_t):=t.
$$ 
\end{prop}

\begin{proof} Notice that $(i \,dz \wedge d\bar z)^2=0$ gives $\omega^2=0$. Moreover, $\omega$ can be written as the following form: 
$$\omega= i \alpha\wedge\bar \alpha,$$ where
$$
\alpha:=f^* dz= \frac{i-\bar t}{t-\bar t} \,d\zeta+\frac{t-i}{t-\bar t}\, d\bar \zeta + \frac{(i-\bar t)(\bar \zeta-\zeta)}{(t-\bar t)^2}\,dt +\frac{(t-i)(\bar \zeta-\zeta)}{(t-\bar t)^2}\,d\bar t,
$$
we get
$$
\omega=\frac i {{\rm Im}\, t} \left(d\zeta\wedge d\bar \zeta+ A\, d\zeta \wedge d\bar t+ A\, dt \wedge d\bar \zeta+ |A|^2 dt\wedge d\bar t \right), \ \  A:=\frac{\zeta-\bar \zeta}{\bar t-t}.
$$
Thus $\omega$ is of degree-$(1,1)$ and positive on each fiber. Hence $\omega$ is a Monge-Amp\`ere form. 
\end{proof}
\begin{rem}
	The above fibration possesses a natural $SL_2(\mathbb Z)$ action
$$
SL_2(\mathbb Z) \ni 
\begin{pmatrix}
a & b\\
c & d
\end{pmatrix} 
: (t, \zeta) \mapsto \left(\frac{at+b}{ct+d}, \frac{\zeta}{ct+d}\right),
$$
which preserves $\omega$. Let $\Gamma$ be a congruence subgroup of  $SL_2(\mathbb Z)$, then each $\Gamma$ quotient of the upper half-plane $\mathbb H$ can be compactified, thus the regular part induces a Monge-Amp\`ere fibration over a quasi-projective manifold. Similarly, one can also construct the Monge-Amp\`ere family of Abelian varieties, see Remark \ref{remark4.4} for another approach. 
\end{rem}

\subsection{Finite dimensional Higgs bundles} \label{Higgs bundle}
Denote by $\mathfrak{gl}_n (\mathbb C)$ the space of $n$ by $n$ complex matrices. Consider the following bounded symmetric domain of the third type
$$
{\rm BSD_{III}}:=\{B \in \mathfrak{gl}_n (\mathbb C): B=B^T, \ B \bar B^T <1\}, 
$$
where $B^T$ denotes the transpose of $B$ and $B \bar B^T <1$ means all eigenvalues of $B \bar B^T$ are less than one. One may define a canonical  \emph{holomorphic motion} of $\mathbb C^n$: 
\begin{equation}\label{eq:hm}
F:{\rm BSD_{III}} \times \mathbb C^n \to  {\rm BSD_{III}} \times \mathbb C^n; \ \   F(B, z)=(B, \zeta),  \ \ \zeta:=z+B\bar z,
\end{equation}
where we think of $z$ as a column vector and $B\bar z$ denotes the matrix multiplication. The natural metric $i\partial\dbar |z|^2$ on $\mathbb C^n$ defines a relative K\"ahler metric, still write it as $i\partial\dbar |z|^2$,  on ${\rm BSD_{III}} \times \mathbb C^n$.  Then one can check that
$$
\Omega:=(F^{-1})^* (i\partial\dbar |z|^2) 
$$
is of degree $(1,1)$ with respect to the $(B, \zeta)$ coordinate on ${\rm BSD_{III}} \times \mathbb C^n$. 
\begin{thm}\label{th:bo-1}  Put $\X:={\rm BSD_{III}} \times \mathbb C^n$, then the natural projection
$$
p: (B,\zeta) \to B,
$$
defines a (non-proper) Monge-Amp\`ere fibration $p:(\X, \Omega)\to{\rm BSD_{III}} $.
\end{thm}

\begin{proof} Notice that it is positive on the central fiber and symplectic on each fiber, thus $\Omega$ is relative K\"ahler. Moreover, $(i\partial\dbar |z|^2)^n=0$ implies that $\Omega^n=0$. Thus $\Omega$ is a Monge-Amp\`ere form.
\end{proof}
\begin{rem}\label{remark4.4}
	 Fix an abelian variety $\mathbb C^n / \mathbb Z^{2n}$, the map $F$ in \eqref{eq:hm} induces a natural $\mathbb Z^{2n}$ action on $\X$, which gives a Monge-Amp\`ere family of Abelian varieties $\X/\mathbb Z^{2n} \to {\rm BSD_{III}}$.
\end{rem}

\subsubsection{Higgs bundles over ${\rm BSD_{III}}$}For each $t\in {\rm BSD_{III}}$, let us denote by $\mathcal A^k_t$ the space of \emph{translation invariant} $k$-forms on $p^{-1}(t)=\mathbb C^n$. Then we have the following finite rank vector bundle
$$
\mathcal A^k:=\{\mathcal A^k_t\}_{t\in {\rm BSD_{III}}}.
$$
Notice that our holomorphic motion $F$ in \eqref{eq:hm} defines a flat connection
\begin{equation}\label{eq:Bo-nabla}
\nabla:=\sum dt^j \otimes L_{V_j} +\sum d\bar t^j\otimes L_{\bar V_j}, \ \ V_j:=F_*\left(\frac{\partial}{\partial t^j}\right),
\end{equation}
on $\mathcal A^k$ (since $F$ is linear on fibers, the above connection is well defined on the space of invariant forms; flatness follows from $[\frac{\partial}{\partial t^j}, \frac{\partial}{\partial t^k}]=[\frac{\partial}{\partial t^j}, \frac{\partial}{\partial \bar t^k}]=0$). 
Denote by $
\mathcal A^{p,q}:=\{\mathcal A^{p,q}_t\}_{t\in {\rm BSD_{III}}}
$ each $(p,q)$ component of $\mathcal A^k$, i.e. each $\mathcal A^{p,q}_t$ is the space of translation invariant $(p,q)$-forms on $p^{-1}(t)$. By the Cartan formula for the Lie derivative, we have
\begin{equation}\label{eq:Bo3.8}
L_{V_j}=[d, \delta_{V_j}]=[\partial, \delta_{V_j}]+[\dbar, \delta_{V_j}],
\end{equation}
thus only $[\partial, \delta_{V_j}]$ preserve the bidegree, from which we know the induced connection on each $\mathcal A^{p,q}$ can be written as
$$
D=\sum dt^j \otimes D_{\partial/\partial  t^j} + \sum d\bar t^j \otimes D_{\partial/\partial \bar t^k}, \ \  D_{\partial/\partial  t^j}:=[\partial, \delta_{V_j}], \  D_{\partial/\partial \bar t^k}:=[\dbar, \delta_{\bar V_k}],
$$
Moreover, we have
$$
\nabla-D=\theta+\bar \theta, \ \ \theta:=\sum dt^j \otimes [\dbar, \delta_{V_j}].
$$
We call $\theta$ the \emph{Higgs field} on $\mathcal A^k$. We also need the following lemma, which is a special case of Theorem 5.6 in \cite{Wang17}.

\begin{lemma} $D$ defines a Chern connection on each $\mathcal A^{p,q}$ with respect to the metric defined by $\Omega$, moreover $[\dbar, \delta_{V_j}]^*=[\partial, \delta_{\bar V_j}]$.
\end{lemma}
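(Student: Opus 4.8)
The plan is to verify the two assertions separately and directly from the definition of $D$, $\Omega$ and the vector fields $V_j$. Recall that $V_j = F_*(\partial/\partial t^j)$ and that $F$ is fiberwise linear, so the flat connection $\nabla$ in \eqref{eq:Bo-nabla} preserves the lattice of translation-invariant forms; the point is to see how its $(p,q)$-preserving part $D$ interacts with the $\Omega$-metric on each $\mathcal A^{p,q}$.

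\emph{Step 1: $D$ is metric-compatible.} First I would note that the $\Omega$-inner product on $\mathcal A^{p,q}_t$ is, by Theorem \ref{th:bo-1}, nothing but the pointwise Hermitian pairing on invariant $(p,q)$-forms on the fiber $\mathbb C^n$ induced by the (fiberwise constant) Hermitian metric that $\Omega$ restricts to there. For a pair of sections $u,v$ of $\mathcal A^{p,q}$ one computes $\partial_{t^j}\langle u,v\rangle$ using $\nabla = D + \theta + \bar\theta$; since $\nabla$ is flat and comes from Lie derivatives of genuine differential forms on $\X$ along $V_j, \bar V_j$, one gets $\partial_{t^j}\langle u,v\rangle = \langle \nabla_{\partial/\partial t^j} u, v\rangle + \langle u, \nabla_{\partial/\partial \bar t^j} v\rangle$ modulo the change of the metric itself — and the key input is that the fiber metric coming from $\Omega$ is \emph{independent of $t$ up to the Lie-derivative terms already accounted for}, which is exactly the content of $\Omega$ being $(1,1)$ in the $(B,\zeta)$-coordinates. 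Separating bidegrees (the $\theta$ part shifts $(p,q)\to(p-1,q+1)$ and hence is orthogonal to the relevant components) leaves $\partial_{t^j}\langle u,v\rangle = \langle D_{\partial/\partial t^j}u,v\rangle + \langle u, D_{\partial/\partial \bar t^j}v\rangle$, which together with the fact that $D_{\partial/\partial t^j} = [\partial,\delta_{V_j}]$ has no $d\bar t$ component on a holomorphic bundle shows $D$ is the Chern connection.

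\emph{Step 2: the adjoint identity.} For the formula $[\dbar,\delta_{V_j}]^* = [\partial,\delta_{\bar V_j}]$ I would argue pointwise on the fiber, where everything is translation-invariant and hence reduces to linear algebra on $\wedge^\bullet(\mathbb C^n)^*$ with the constant Hermitian metric. The operator $\delta_{V_j}$ is contraction with $V_j$; its Hilbert-space adjoint with respect to the fiber metric is wedging with the $(1,0)$-form metrically dual to $V_j$, and one checks that this dual form is (up to conjugation) the contraction data for $\bar V_j$ — this is where one uses that $V_j\,\rfloor\,\Omega$ is, by construction of $F$, the pullback of $dt^j$, i.e. a purely "base" form, so that $V_j$ and $\bar V_j$ are metrically paired. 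Combining $\delta_{V_j}^* = $ (wedge by dual of $V_j$) with the standard adjunction $\dbar^* = $ (involving $\partial$ and contractions) and the Jacobi/Leibniz identity for the bracket $[\,\cdot\,,\,\cdot\,]$ of graded operators yields $[\dbar,\delta_{V_j}]^\ast = [\dbar^\ast, \delta_{V_j}^\ast] = [\partial, \delta_{\bar V_j}]$ after matching the pieces.

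\emph{Main obstacle.} The routine parts are the bidegree bookkeeping and the sign conventions in the graded commutators; the genuinely substantive point — and the one I would spend the most care on — is Step 1's claim that the $\Omega$-metric on the fibers is "constant in $t$ modulo the flat transport $\nabla$." Concretely this is the statement that the holomorphic motion $F$ trivializes not just the bundle $\mathcal A^k$ but its metric up to the Higgs terms, which is equivalent to $\Omega = (F^{-1})^*(i\partial\dbar|z|^2)$ being of type $(1,1)$ in the $(B,\zeta)$-coordinates — precisely the claim established just before Theorem \ref{th:bo-1} using symmetry of $B$. So the proof is really a matter of unwinding that $(1,1)$-ness into the compatibility of $D$ with the metric; once that is in hand, the adjoint identity follows by the fiberwise linear-algebra computation above. (Alternatively, both statements are the $k=1$, translation-invariant special case of Theorem \ref{th:122}, so one may simply invoke that; but the self-contained argument sketched here is the one I would write out for $k=1$ as promised in the text.)
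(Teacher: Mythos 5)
Your Step 1 is essentially the paper's own argument (it is the $k=1$ computation of Section 3.3 transported to the translation-invariant setting): since $F^*\Omega=i\partial\dbar|z|^2$ does not depend on $t$, the flat connection $\nabla$ is parallel for the pairing, and the Higgs terms drop out of $\partial_{t^j}\langle u,v\rangle$ for bidegree reasons because $\theta$ shifts $(p,q)\to(p-1,q+1)$. The only correction I would make there is that the precise input is $L_{V_j}\Omega=0$ (because $\Omega$ is the $F^{-1}$-pullback of a $t$-independent form), while the $(1,1)$-ness of $\Omega$ is what legitimizes the bidegree bookkeeping, not what makes the metric parallel.

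Step 2, however, has a genuine gap. The factorization $[\dbar,\delta_{V_j}]^*=[\dbar^*,\delta_{V_j}^*]$ is not available as stated: $\delta_{V_j}$ by itself is not an operator on fiber forms, since $V_j$ is horizontal rather than tangent to $X_t$, so its fiberwise adjoint is undefined; and on the ambient space the form $\Omega$ is degenerate in the horizontal directions (indeed $V_j\,\rfloor\,\Omega$ is a pure base form, which vanishes in the Poisson--K\"ahler case), so the "metric dual of $V_j$" is not the object you need. Only the combination $[\dbar,\delta_{V_j}]$ restricted to $X_t$ is tensorial: it is the zeroth-order action of $\kappa_j=(\dbar V_j)|_{X_t}\in A^{0,1}(X_t,T_{X_t})$, and likewise $[\partial,\delta_{\bar V_j}]|_{X_t}$ is the action of $\overline{\kappa_j}$. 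More importantly, $\kappa_j^*=\overline{\kappa_j}$ is not a formal adjunction. For a general $T_{X_t}$-valued $(0,1)$-form $\Phi$, the adjoint of the action of $\Phi$ is the action of the metric transpose of $\overline{\Phi}$, and the two agree exactly when the lowered tensor $g_{\alpha\bar\gamma}\Phi^{\gamma}_{\bar\beta}$ is symmetric. For $\kappa_j$ this symmetry is the identity $A^{\alpha}_{j\bar\beta}=A^{\sigma}_{j\bar\gamma}g^{\bar\gamma\alpha}g_{\sigma\bar\beta}$ recorded as \eqref{2.1} in Section 7.4 (the analogue of $\bm\Phi=\bm\Phi^{\rm T}$ in the linear model of Section 3.1), and it is a consequence of $d\Omega=0$. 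Your argument never invokes this symmetry --- the remark that $V_j\,\rfloor\,\Omega$ is a base form only expresses horizontality --- so the adjoint identity is not established as written. For the record, the paper itself does not write out this verification at the point where the lemma is stated either: it delegates both assertions to Theorem \ref{th:122}, which is in turn quoted from \cite{Wang17}, with only the Chern-connection half worked out explicitly for $k=1$ in Section 3.3.
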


\begin{proof} To show that the $(0,1)$-part of $D$ is integrable, it is enough to prove
$$
[[\dbar, \delta_{\bar V_j}],[\dbar, \delta_{\bar V_k}]]=0,
$$ 
which follows from $[L_{\bar V_j}, L_{\bar V_k}]=L_{[\bar V_j, \bar V_k]}=0$. Now it suffices to check that $D$ preserves the metric and $[\dbar, \delta_{V_j}]^*=[\partial, \delta_{\bar V_j}]$. The idea is to use the primitive decomposition and the fact that $\nabla$ commutes with $\Omega\wedge$. Details can be found in \cite{Wang17}. 
\end{proof}

\begin{thm}\label{th:higgs} The above lemma implies that each $(\mathcal A^k, \theta, D)$ is a flat Hermitian Higgs bundle.
\end{thm}

\subsubsection{Curvature properties of the space of complex structures}

Let $(V, \omega)$ be a $2n$ dimensional real vector space $V$ with a symplectic form $\omega$. Denote by $\mathcal J(V, \omega)$ the space of $\omega$-compatible complex structures on $V$.  For each $J\in {\rm BSD_{III}}$ and $p+q=k$, denote by 
$\wedge_J^{p,q}$ the space of $J$-$(p,q)$-forms in  $\wedge^k (\mathbb C\otimes V^*)$. It is known that  $\mathcal J(V, \omega)$ is isomorphic to ${\rm BSD_{III}}$, and the Higgs bundle $\mathcal A^k$ has the following description
$$
\mathcal A^k \simeq \mathcal H^k:=\oplus_{p+q=k} \mathcal H^{p,q}, \ \  \mathcal H^{p,q}:=\{\wedge_J^{p,q}\}_{J\in\mathcal J(V, \omega)}. 
$$
Thus as in \cite{Wang_Higgs} one may define the associated  \emph{Lu's Hodge metric}, say $\omega_{\mc{WP}, k}$, on $\mathcal J(V, \omega)$.  One may verify that all $\omega_{\mc{WP}, k}$ are equal up to positive constants, i.e.
$$
\omega_{\mc{WP}, k} =c(k,n) \omega_{\mc{WP}, 1},
$$
where $c(k,n)$ depends only on $k$ and $n$. In fact, $\omega_{\mc{WP}, 1}$ is just  the generalized Weil-Petersson metric in Definition  \ref{de:DFWP} (up to a factor).
 Hence 
 $\omega_{\mc{WP},1}$ is K\"ahler on $\mathcal J(V, \omega)$ with non-positive holomorphic bisectional curvature; moreover, its holomorphic sectional curvature is bounded above by $-2/n$.

\subsection{Geodesics}

\subsubsection{K\"ahler metric geodesics}

Let $(X, \omega)$ be a fixed $n$-dimensional compact K\"ahler manifold.  Consider the following Mabuchi space of K\"ahler potentials 
$$
\mathcal K:=\{\phi\in C^\infty (X, \mathbb R): \omega+i\partial\dbar \phi >0 \}
$$ 
on $X$. Fix $\phi_0, \phi_1$ in $\mathcal  K$, if there exists a smooth function $\phi$ on a neighborhood of the closure of  
$$
\X:=\mathbb H_{0,1} \times X, \ \ \ \mathbb H_{0,1}:= \{ \tau \in \mathbb C: 0<{\rm Re} \, \tau<1 \},
$$ 
such that $\phi(0,x)=\phi_0(x)$ , $\phi(1,x)=\phi_1(x)$, $\phi$ does not depend on the imaginary part of $\tau$ and 
$$
(\omega+i\partial\dbar\phi)^{n+1} \equiv 0  \ \text{on}\ \X, \ \ \ \phi(t, \cdot)\in \mathcal K,
$$
then we say that $\{\phi(t, \cdot)\}_{t\in [0,1]}$ is a smooth geodesic in $\mathcal K$ connecting $\phi_0, \phi_1$. Associated with a smooth geodesic, the following trivial fibration
$$
p: (\X, \omega+i\partial\dbar \phi) \to \mathbb H_{0,1}
$$
is a Monge-Amp\`ere fibration.

\subsubsection{Convex function geodesics} If $\phi$ is a smooth, strictly convex function on $\mathbb R^n$, then we know that its gradient map
$$
\nabla \phi : x\mapsto (\phi_{x_1}(x), \cdots, \phi_{x_n}(x)), \ \ \phi_{x_j}:=\partial\phi/\partial x_j,
$$
defines a diffeomorphism from $\mathbb R^n$ onto an open set
$$
A_\phi:= \nabla \phi(\mathbb R^n)
$$
in $\mathbb R^n$. Moreover, one can check that $A_\phi$ is convex in $\mb{R}^n$.

\begin{defn} Let $A$ be a bounded open convex set in $\mathbb R$.  A smooth, strictly convex function $\phi$ on $\mathbb R^n$ is said to be of type $A$ if $A_\phi=A$. We call denote by $\mathcal C_A$ the space of type $A$ functions.
\end{defn}

Note that  $\mathcal C_A$ is not empty. In fact, if $\psi$ is a smooth, strictly convex function on $A$ that tends to infinity at the boundary of $A$, then its Legendre transform
$$
\psi^*(x):=\sup_{y\in  A} x\cdot y-\psi(y),  \ \ \forall \ x\in \mathbb R,
$$
lies in $\mathcal C_A$. $A_{\phi+\psi}=A_\phi+A_\psi$ implies that $\mathcal C_A$ is a convex set.

 The Legendre transform of $\phi \in \mathcal C_A$ is defined by 
$$
\phi^*(y):= \sup_{x\in \mathbb R} x\cdot y-\phi(x), \ \ \forall \ y\in A.
$$
We know that $\phi^*$ is smooth and strictly convex  on $A$. Moreover, if $\phi_0, \phi_1 \in \mathcal C_A$, then
\begin{equation}\label{eq:c-geo}
\phi: (t,x) \mapsto (t\phi^*_1+(1-t)\phi^*_0)^*(x)
\end{equation}
satisfies
$$
MA (\phi)=0
$$
on $[0,1] \times \mathbb R^n$, where $MA(\phi)$ denotes the determinant of the full Hessian of $\phi$.

\begin{defn} We call $\phi$ defined in \eqref{eq:c-geo} the geodesic between $\phi_0, \phi_1 \in \mathcal C_A$.
\end{defn}

Let $\X:= [0,1] \times \mathbb R^n \times \mathbb R^{n+1}\subset \mathbb C^{n+1}$ be the natural complexification of $[0,1] \times \mathbb R^n$. Think of $\phi$ as a function on $\X$, then 
$$
p: (\X, i\partial\dbar \phi) \to \B, \ \ \B:=[0,1] \times \mathbb R \subset \mathbb C,
$$  
is a (non-proper) Monge-Amp\`ere fibration. 

\subsubsection{Hermitian form geodesics} Denote by 
$\mathcal H$ the space of Hermitian forms on $\mathbb C^n$. Let $\{e_j\}$ be the canonical basis of $\mathbb C^n$ then a Hermitian form, say $\omega\in \mathcal H$, can be written 
as
$$
\omega=i\sum_{j,k=1}^n a_{j\bar k} \, e_j^* \wedge \overline{e_k^*},
$$
where $A:=(a_{j\bar k})$ satisfies
$$
a_{j\bar k}=\overline{a_{k\bar j}}
$$
and $\sum a_{j\bar k} \xi^j \bar \xi^k >0$ if $\xi\neq 0$. Thus we can identify $\omega$ with a Hermitian matrix $A$. Now let 
$$
\mathbb A:=\{A_t\}_{t\in [0,1]}
$$ 
be a smooth family (smooth on a neighborhood of $[0,1]$)  of Hermitian matrices. We know that $\mathbb A$ defines a smooth metric on the trivial bundle 
$$
p:  \X\to \B, \ \  \X:=[0,1]\times \mathbb R \times \mathbb C^n, \ \  \B:=[0,1]\times \mathbb R\subset \mathbb C, 
$$
with Chern curvature 
$$
\Theta_{tt}(\mathbb A) e_j=\sum (a_{j\bar k, t} a^{\bar k l})_t e_l= \sum (a_{j\bar k, tt} a^{\bar k l} - a_{j\bar k, t} a_{p\bar q, t}a^{\bar k p} a^{\bar q l})e_l,
$$
where $(a^{\bar k l})$ denotes the inverse matrix of $(a_{j\bar k})$ and $f_{,t}$ denotes the derivative of $f$ with respect to $t$. Think of 
$$
\phi(t,z):= \sum a_{j\bar k}(t) z^j\bar z^k
$$
as a function on $\X$. Then $i\partial\dbar \phi$ defines a relative K\"ahler form on $\X$. A direct computation gives

\begin{prop}\label{pr5.3} $\Theta_{tt}(\mathbb A) \equiv 0$ if and only if $(i\partial\dbar \phi)^{n+1} \equiv 0$. 
\end{prop}

Now we know that if $\mathbb A$ is flat, then 
$$
p: (\X, i\partial\dbar \phi) \to \B
$$
is a (non-proper) Monge-Amp\`ere fibration.

\begin{defn} We say that $\mathbb A$ is the geodesic between $A_0$ and $A_1$ if $\Theta_{tt}(\mathbb A) \equiv 0$.
\end{defn}


\begin{thebibliography}{99}

\bibitem{A61} L. V. Ahlfors, {\it Some remarks on Teichm\"uller's space of Riemann surfaces}, Ann. Math. (1961), 171-191.

\bibitem{A61b} L. V. Ahlfors, {\it Curvature properties of Teichm\"uller's space}, Journal d'Analyse Math\'ematique {\bf 9} (1961), 161-176.

\bibitem{Aikou} T. Aikou, {\it Projective flatness of complex Finsler metrics}, Publ. Math. Debrecen {\bf 63} (2003), no. 3, 343-362.

\bibitem{Bern2} B. Berndtsson, {\it Curvature of vector bundles associated to holomorphic fibrations}, Ann. Math. {\bf 169}, (2009), 531-560.

\bibitem{Bern3} B. Berndtsson. Positivity of direct image bundles and convexity on the space of K\"ahler metrics. J. Diff. Geom., {\bf 81}(3), (2009), 457-482.


\bibitem{Bern4} B. Berndtsson, {\it Strict and non strict positivity of direct image bundles}, Math. Z. {\bf 269} (3-4), (2011), 1201-1218.

\bibitem{Bo-new} B. Berndtsson, {\it Long geodesics in the space of K\"ahler metrics}, Anal Math (2022). https://doi.org/10.1007/s10476-022-0140-z


\bibitem{BPW} B. Berndtsson, M. P\u aun, X. Wang, {\it Algebraic fiber spaces and curvature of higher direct images},   Journal of the Institute of Mathematics of Jussieu, 2020, 1-56. doi:10.1017/S147474802000050X.

\bibitem{BT} R. Bott,  L. W. Tu, Differential Forms in Algebraic Topology (Springer, 1982).


\bibitem{Burns82} D. Burns, {\it Curvatures of Monge-Amp\`ere foliations and parabolic manifolds}, Ann. Math. {\bf 115} (1982), 349-373.

\bibitem{FLW} H. Feng, K. Liu, X. Wan, {\it Chern forms of holomorphic Finsler vector bundles and some applications}, Inter. J. Math. Vol. {\bf 27}, No. 4, (2016) 1650030.

\bibitem{Finski} S. Finski, {\it On Monge-Amp\`ere volumes of direct images}, International Mathematics Research Notices, 2021, rnab058. https://doi.org/10.1093/imrn/rnab058

\bibitem{Fulton} W. Fulton, Intersection Theory, Second Edition, Springer, 1998.

\bibitem{FS} A. Fujiki, G, Schumacher, {\it The moduli space of extremal compact K\"ahler manifolds and generalized Weil-Petersson metrics}. Publications of the Research Institute for Mathematical Sciences, {\bf 26} (1990), 101-183.

\bibitem{Huy} D. Huybrechts, Complex geometry. An introduction. Universitext. Springer-Verlag, Berlin, 2005. xii+309 pp.

\bibitem{Ko3} S. Kobayashi, Differential geometry of complex vector bundles, Iwanami-Princeton Univ. Press, 1987.

\bibitem{Koiso} N. Koiso, {\it Einstein metrics and complex structures}, Invent. Math. {\bf 73} (1983), no. 1, 71-106.

\bibitem{Kob1} S. Kobayashi, T. Ochiai, {\it On complex manifolds with positive tangent bundles}, J. Math. Soc. Japan, Vol. {\bf 22}, No. 4, (1970), 499-525.


\bibitem{LSY} K. Liu, X. Sun, S. T. Yau, {\it Good Geometry on the Curve Moduli}, Publ. Res. Inst. Math. Sci. {\bf 44} (2008), no. 2, 699-724.

\bibitem{LSYY} K. Liu, X. Sun, X. Yang, S.-T. Yau, {\it Curvatures of Moduli Space of Curves and Applications}, Asian Journal of Mathematics {\bf 21},  (2017) no. 5, 841-54.

\bibitem{Lu} Z. Lu, {\it On the geometry of classifying spaces and horizontal slices}, Amer. J. Math. {\bf 121} (1999), no. 1, 177-198.


\bibitem{LS} Z. Lu, X. Sun, {\it Weil-Petersson geometry on moduli space of polarized Calabi-Yau manifolds}, J. Inst. Math. Jussieu {\bf 3} (2004), no. 2, 185-229.


\bibitem{Nau} P. Naumann, {\it Curvature of higher direct images},  Ann. Fac. Sci. Toulouse Math. (6) {\bf 30} (2021), no. 1, 171-201.

\bibitem{Nan} A. Nannicini, {\it Weil--Petersson metric in the space of compact polarized
K\"ahler Einstein manifolds with $c_1 = 0$}, Manuscripta Math. {\bf 54} (1986), 405--438.

\bibitem{Sch-1}	G. Schumacher, {\it On the geometry of moduli spaces}. Manuscripta Math. {\bf 50} (1985), 229--267. 

\bibitem{Sch0} G. Schumacher, {\it Harmonic maps of the moduli space of compact Riemann surfaces}, Math. Ann. 275 (1986), no. 3, 455-466.

\bibitem{Sch} G. Schumacher, {\it The curvature of the Petersson-Weil metric on the moduli space of K\"ahler-Einstein manifolds}, Complex analysis and geometry, 339-354, Univ. Ser. Math., Plenum, New York, 1993.

\bibitem{Sch12} G. Schumacher, {\it  Positivity of relative canonical bundles and applications}, Invent. Math. {\bf 190} (2012), 1--56.

\bibitem{Shiff} B. Shiffman, A. Sommese, Vanishing theorems on complex manifolds, Birkh\"{a}user, Boston, Basel, Stuttgart, 1985.

\bibitem{Siu} Y.-T. Siu, {\it Curvature of the Weil-Petersson metric in the moduli space of compact K\"ahler-Einstein manifolds of negative first Chern class}, in Complex Analysis, Papers in Honour of Wilhelm Stall (P.-M. Wong and A. Howard, eds.) (Vieweg, Braunschweig, 1986).

\bibitem{Smo} N. K. Smolentsev, {\it Curvature of the space of associated metrics on a symplectic manifold}, Siberian Mathematical Journal {\bf 33} (1992), 111--117.

\bibitem{Tian} G. Tian, {\it Smoothness of the universal deformation space of compact Calabi-Yau manifolds and its Petersson-Weil metric}. Mathematical aspects of string theory (San Diego, Calif., 1986), 629--646, Adv. Ser. Math. Phys., 1, World Sci. Publishing, Singapore, 1987. 32G13 (32G15 53C25 58D99)




\bibitem{Todorov} A. N. Todorov, {\it The Weil-Petersson geometry of the moduli space of $SU(n\geq 3)$ (Calabi-Yau) manifolds}, I. Comm. Math. Phys. {\bf 126} (1989), no. 2, 325-346.

\bibitem{Tromba} A. J. Tromba, {\it On a natural algebraic affine connection on the space of almost complex structures and the curvature of Teichm\"uller space with respect to
its Weil-Petersson metric}, Manuscripta Math. {\bf 56} (1986), no. 4, 475-497.



\bibitem{WW} X. Wan, X. Wang, {\it Poisson-K\"ahler fibration I: curvature of base manifold}, 2019,  arXiv:1908.03955v2.


\bibitem{Wan1} X. Wan, G. Zhang, {\it The asymptotic of curvature of direct image bundle associated with higher powers of a relatively ample line bundle}, Geom. Dedicata 214 (2021), 489-517.


\bibitem{WangCL} C.-L. Wang, {\it Curvature properties of the Calabi-Yau moduli}, Doc. Math. {\bf 8} (2003), 577-590.



\bibitem{Wang_Higgs} X. Wang, {\it Curvature restrictions on a manifold with a flat Higgs bundle}, arXiv: 1608.00777.

\bibitem{Wang15} X. Wang, {\it A curvature formula associated to a family of pseudoconvex domains}, Ann. Inst. Fourier (Grenoble) {\bf 67} (2017), no. 1, 269-313. 



\bibitem{Wang16} X. Wang, {\it Curvature of higher direct image sheaves and its application on negative-curvature criterion for the Weil-Petersson metric}, arXiv: 1607.03265. 

\bibitem{Wang17} X. Wang, {\it Notes on variation of Lefschetz star operator and $T$-Hodge theory}, arXiv:1708.07332.

\bibitem{Wells} R. Wells, Raymond, Differential analysis on complex manifolds. Third edition. With a new appendix by Oscar Garcia-Prada. Graduate Texts in Mathematics, {\bf 65}. Springer, New York, 2008.

\bibitem{Wolpert} S. Wolpert, {\it Chern forms and the Riemann tensor for the moduli space of curves}, Invent. Math. {\bf 85} (1986), no. 1, 119-145.

\bibitem{Wu} Y. Wu, {\it The Riemannian sectional curvature operator of the Weil-Petersson metric and its application}, J. Differential Geom. {\bf 96} (2014), no. 3, 507-530.

\end{thebibliography}
\end{document}